\documentclass[a4paper,11pt]{amsart}

\usepackage[utf8]{inputenc}

\usepackage{pgfplots}
\pgfplotsset{compat=newest}

\usepackage{amssymb,mathtools,amsfonts,latexsym,rawfonts, mathrsfs, lscape}
\usepackage{stmaryrd,tikz,pgfplots}
\usepackage{marginnote}  
\usepackage{hyperref}
\usepackage[all]{xy}
\usepackage{amsthm,latexsym,amssymb}
\usepackage{tensor}

\usepackage{tikz-cd} 

\usepackage{multirow}

\usepackage{array, tabularx}

\usepackage{booktabs} 
\usepackage{siunitx}

\usepackage{graphicx}
\usepackage{subfigure}

\usepackage{tikz-cd}
\usepackage{tikz}
\usepackage{color}

\usepackage{dynkin-diagrams}
\usepackage{mathdots}
\usepackage{tabularx}
\usepackage{mathtools}

\usepackage[top=1.45in, bottom=0.85in, left=0.85in, right=0.85in]{geometry}

\newtheorem{theo}{Theorem}[section]
\newtheorem*{theo*}{Theorem}

\newtheorem*{lem*}{Lemma}

\newtheorem*{cor*}{Corollary}
\newtheorem{prop}{Proposition}[section]
\newtheorem*{prop*}{Proposition}

\theoremstyle{definition}

\newtheorem*{defi*}{Definition}

\newtheorem*{conj*}{Conjecture}
\theoremstyle{definition}

\newtheorem*{ex*}{Example}

\newenvironment{rmk}%
{\vskip6pt%
\noindent%
{\it Remark.}}%
{\vskip6pt}

{\vskip6pt%
\noindent%
{\it Notation.}}%
{\vskip6pt}

{\vskip6pt%
\noindent%
{\it Observation.}}%
{\vskip6pt}

\theoremstyle{plain}
\newtheorem{thmint}{Theorem}

\renewcommand{\[}{\begin{equation*}}
\renewcommand{\]}{\end{equation*}}

\def\C{\mathbb{C}}

\DeclareMathOperator\tr{tr}

\DeclareMathOperator{\im}{i}
\DeclareMathOperator{\imm}{Im}

\DeclareMathOperator\Ric{Ric}

\def \H {\mathbb H}

\def \R {\mathbb R}

\def \C {\mathbb C}

\renewcommand{\bar}{\overline}

\title{On LCK Geometry of Gauduchon Connections}

\author{Giuseppe Barbaro}
\address{Giuseppe Barbaro\newline
		\textsc{\indent Institut for Matematik, Aarhus University\newline 
			\indent 8000, Aarhus C, Denmark}}
\email{g.barbaro@math.au.dk}

\author{Alexandra Otiman}
\address{Alexandra Otiman \newline
		\textsc{\indent Institut for Matematik, Aarhus University\newline 
			\indent 8000, Aarhus C, Denmark\newline
			\indent \indent and\newline
			\indent Institute of Mathematics ``Simion Stoilow'' of the Romanian Academy\newline 
			\indent 21 Calea Grivitei Street, 010702, Bucharest, Romania}}
	\email{aiotiman@math.au.dk} 
\keywords{}
\thanks{Both authors are supported by a DFF Sapere Aude grant ``Conformal geometry: metrics and cohomology". The first author is a member of GNSAGA of INdAM.}

\begin{document}
	
\begin{abstract}
It is not a priori clear which of the Gauduchon connections is better suited to LCK and Vaisman manifolds.
We thus investigate the geometry of these connections through Einstein problems and more general analytic and cohomological conditions on their Ricci tensors.
Our results single out the Bismut connection as the privileged one for the non-K\"ahler lcK and Vaisman geometry.
We therefore study the {second-Bismut--Einstein} equation and provide a characterization of these structures on Hopf manifolds.
\end{abstract}
	
\maketitle
	
\section{Introduction}\label{sec: intro}
On a Hermitian manifold $(M, J, g)$, the Levi-Civita connection does not necessarily preserve the complex structure, and there is no unique connection that preserves both $g$ and $J$, or, in other words, that is Hermitian. A distinguished role is played by the one-parameter family of Hermitian connections introduced by Gauduchon in \cite{gau97}, which interpolate between the Chern connection and the Bismut connection.
While all these connections coincide in the K\"ahler setting, their curvature tensors carry genuinely different information on non-K\"ahler manifolds. In this paper, we study them in the setting of locally conformally K\"ahler (lcK) geometry, with the guiding question of whether some members of the Gauduchon family are better adapted than others to lcK structures.

A natural benchmark is to study the properties of the Ricci tensors and in particular Einstein problems.
Regarding the latter, the lack of symmetries in the Gauduchon curvature tensors leads to different ways of
tracing out the Ricci curvatures, resulting in multiple Gauduchon--Einstein problems.
Furthermore, the failure of the first Bianchi identity results in the Einstein factor possibly being non-constant.
We shall also observe that, in the lcK setting, {\em first-Gauduchon--Einstein} metrics with non-identically-zero Einstein factor are globally conformally K\"ahler, as shown by Proposition \ref{prop: firstRicci}. 
Therefore, for a non-K\"ahler lcK manifold the first Einstein equation reduces to the vanishing of the first Ricci form, a problem that was studied in \cite{bo}.

Contrary to the first Ricci form, the second Ricci form $\Ric^{t,2}$ of the Gauduchon connection $\nabla^t$ does not represent a characteristic class in general and need not satisfy any analytic or cohomological condition. We thus investigate how restrictive it is to impose such conditions in the lcK setting, asking in particular when it can be closed or $dd^c$-closed and study the associated {\em second Gauduchon--Einstein equation}
\begin{equation*}
    \Ric^{t,2}=f\omega,
\end{equation*}
and to what extent it forces the lcK structure to be Vaisman or K\"ahler.

For the Chern connection, we show that these cohomological conditions on the second Ricci form are incompatible with strict lcK geometry.
Namely, in Propositions \ref{prop: dRic Ch} and \ref{prop: ddc Ric ch} we prove that if $d\Ric^{Ch,2} = 0$, or the dimension is greater than $2$ and $dd^c\Ric^{Ch,2}=0$, then the metric is K\"ahler.
On the contrary, the same cohomological conditions are satisfied by the second-Bismut--Ricci form and single it out among the Gauduchon connections, see Theorems \ref{th: dRic vais} and \ref{th: main dRic}. 

Moving to the differential constraints on the second Ricci tensors, our results keep indicating that among the Gauduchon connections the Bismut connection is the privileged one for the non-K\"ahler lcK and Vaisman geometry, whereas analogous conditions for the other connections tend to force K\"ahler rigidity.
Specifically, the compatibility between the lcK condition and the vanishing of the second-Gauduchon--Ricci form is described in Theorem \ref{th: main Ric=0} with the following result.
\begin{thmint}
    Let $(M^n,J,\omega)$ be an lcK manifold of dimension $n\geq3$, and suppose that $\omega$ is Gauduchon and $\Ric^{t,2}=0$.
    Then, if $t\neq-1$ or $2n-1+2\sqrt{n(n-1)}$ the metric is K\"ahler.\\
    If $t=-1$ the metric is Vaisman, while if $t=2n-1+2\sqrt{n(n-1)}$ it holds $dJ\theta=(1-n)\theta\wedge J\theta$.
    In both cases, $\Ric^{t,1}$ vanishes as well.
\end{thmint}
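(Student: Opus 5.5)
I will compute $\Ric^{t,2}$ explicitly for an lcK metric in terms of the Lee form $\theta$, where $d\omega=\theta\wedge\omega$, and then extract scalar and tensorial identities from the equation $\Ric^{t,2}=0$.

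\emph{Step 1: the formula.} The Gauduchon connections are $\nabla^t=\nabla^{Ch}+\frac{t-1}{4}\,(\text{torsion correction})$, with $\nabla^{-1}$ the Bismut connection. For a conformally Kähler-type torsion $d^c\omega=J\theta\wedge\omega$ (up to sign conventions), the difference tensor between $\nabla^t$ and $\nabla^{Ch}$ is linear in $\theta$ and $J\theta$. The second Ricci form $\Ric^{t,2}=\sum_i R^t(e_i,Je_i)$ traced on the endomorphism slot is then
\[
\Ric^{t,2}=\Ric^{Ch,2}+a(t)\,dJ\theta\text{-terms}+b(t)\,(\nabla\theta)\text{-terms}+c(t)\,|\theta|^2\omega+e(t)\,\theta\wedge J\theta ,
\]
where the coefficients are quadratic in $t$. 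Locally $\omega=e^{f}\omega_K$ with $\omega_K$ Kähler and $\theta=df$. I will express $\Ric^{Ch,2}$ through the Kähler Ricci form of $\omega_K$ and $i\partial\bar\partial$-type terms of $f$. The result is a formula containing $dJ\theta$, $(dJ\theta)\circ J$-type symmetrizations, $\theta\wedge J\theta$, $|\theta|^2\omega$, $d^*\theta\,\omega$, and the global Ricci form $\rho$, which equals the Chern first Ricci form.

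\emph{Step 2: trace and Gauduchon condition.} Contracting $\Ric^{t,2}=0$ with $\omega$ gives a scalar identity $s^{t}=0$, which relates $d^*\theta$, $|\theta|^2$ and the Chern scalar curvature. The Gauduchon hypothesis $d^*\theta=0$ kills the divergence term. I then integrate against the volume form. Because the mixed Chern scalar curvature of a Gauduchon lcK metric is, up to a divergence, a multiple of $|\theta|^2$ plus a term comparing $\Ric^{t,1}$ and $\Ric^{t,2}$, this yields $\int_M q(t)\,|\theta|^2=0$ for a quadratic $q$. The roots of $q$ should be exactly $t=-1$ and $t=2n-1\pm2\sqrt{n(n-1)}$. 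I will check which sign survives a second (tensorial) condition. For all other $t$, $\theta=0$ and the metric is Kähler.

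\emph{Step 3: the special values.} For $t=-1$ and $t=2n-1+2\sqrt{n(n-1)}$, I take the $(1,1)$ primitive part and the $\theta\wedge J\theta$ component of the equation. For $t=-1$ I expect it to force $\nabla\theta=0$ (via a Bochner argument on the symmetric part of $\nabla\theta$, using $d^*\theta=0$), hence Vaisman. For the other root the equation directly reduces to $dJ\theta=(1-n)\theta\wedge J\theta$, with the symmetric part of $\nabla\theta$ vanishing. In both cases I will substitute back into the general identity $\Ric^{t,1}-\Ric^{t,2}=(\text{linear combination of } dJ\theta,\ \theta\wedge J\theta,\ |\theta|^2\omega,\ d^*\theta\,\omega)$ to conclude $\Ric^{t,1}=0$.

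\emph{Main obstacle.} The hardest part is Step 1 together with getting the quadratic $q(t)$ exactly right, with consistent sign conventions for $J\theta$ and the torsion. The dimension assumption $n\ge3$ is presumably used to separate the primitive component from the trace component without degeneracy.
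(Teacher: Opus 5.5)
\textbf{Step~2 cannot restrict $t$: tracing discards exactly the information you need.} You have $\tr_\omega\Ric^{t,2}=\tr_\omega\Ric^{t,1}$ identically, since both are the same double contraction $\sum_{i,j}g(R^{\nabla^t}_{e_i,Je_i}e_j,Je_j)$ up to normalisation. So no ``term comparing $\Ric^{t,1}$ and $\Ric^{t,2}$'' survives the trace.

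Concretely, use \eqref{eq: Ric t,2} with $d^*\theta=0$ and $\tr_\omega dJ\theta=-(n-1)\|\theta\|^2$. This gives $\tr_\omega\Ric^{t,2}=s^{Ch}-\frac{(n-1)^2(1-t)}{2}\|\theta\|^2$, where $s^{Ch}=\tr_\omega\Ric^{Ch,1}$. Integrating merely equates $\int_M s^{Ch}\,\mathrm{vol}_g$ with a multiple of $\int_M\|\theta\|^2\,\mathrm{vol}_g$. The left side is essentially the Gauduchon degree of $K_M^{-1}$, which $\theta$ does not control. So no identity $\int_M q(t)\|\theta\|^2=0$ arises, and in any case a quadratic cannot have your three roots.

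The special values do not come from a trace. The numbers $2n-1\pm2\sqrt{n(n-1)}$ are the zeros of $c_2(t)=\frac{nt}{2}-\frac{(1+t)^2}{8}$, the coefficient of $\|\theta\|^2\omega$ in \eqref{eq: Ric t,2}. The value $t=-1$ is the zero of $c_1(t)+c_2(t)=\frac{(n-1)(1+t)^2}{8}$, where $c_1(t)=\frac{n(1-t)^2}{8}$ is the coefficient of $\theta\wedge J\theta$.

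\textbf{The missing idea is to apply $d$ rather than $\tr_\omega$.} Since $\Ric^{Ch,1}$ and $dJ\theta$ are closed, this removes the unknown curvature and leaves $-c_1\theta\wedge dJ\theta+c_2\bigl(d\|\theta\|^2\wedge\omega+\|\theta\|^2\theta\wedge\omega\bigr)=0$. The steps from there are:
\begin{itemize}
\item $n\geq3$ enters here, through injectivity of $\omega\wedge\cdot$ on $2$-forms, which gives $d\|\theta\|^2\wedge\theta=0$.
\item If $c_2\neq0$, you still need a global argument to make $\|\theta\|$ constant. The paper uses an ODE for $\varphi=\theta^\sharp(\|\theta\|^2)\,\|\theta\|^{-4}$ along the Lee flow, closed off by Stokes with a bounded primitive.
\item With $\|\theta\|$ constant you get $c_1dJ\theta=h\theta\wedge J\theta+c_2\|\theta\|^2\omega$ with $h$ constant. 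Differentiating once more gives $(c_1-h)\theta\wedge dJ\theta=0$.
\item This separates $t=-1$ (Vaisman) from $c_2=0$. In the latter case $\theta\wedge dJ\theta=0$, and the trace yields $dJ\theta=(1-n)\theta\wedge J\theta$.
\end{itemize}

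\textbf{Step~3 also has two problems.}
\begin{itemize}
\item No tensorial condition distinguishes the two roots of $c_2$. Both give $dJ\theta=(1-n)\theta\wedge J\theta$ and $\Ric^{Ch,1}=\frac{(n-1)(t-1)}{2}dJ\theta$, i.e.\ $\Ric^{t,1}=0$. The paper rules out $t=2n-1-2\sqrt{n(n-1)}\in(0,1)$ only through the global Theorem~A of \cite{bo}, which makes the metric Vaisman. A Vaisman metric with $dJ\theta=(1-n)\theta\wedge J\theta$ must have $\theta=0$.
\item Your claim that the symmetric part of $\nabla\theta$ vanishes at $t=2n-1+2\sqrt{n(n-1)}$ is false. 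Since $d\theta=0$, $\nabla\theta$ is symmetric, so this would make $\theta$ parallel and the metric Vaisman. That again contradicts $dJ\theta=(1-n)\theta\wedge J\theta$ unless the metric is K\"ahler.
\end{itemize}

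Your Bochner idea for $t=-1$ is fine: it is Theorem~\ref{thm: Bismut-flat} with $f\equiv0$.
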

We therefore analyze more carefully the Einstein equation for the second-Bismut--Ricci form proving the following statement in Theorem \ref{thm: Bismut-flat}.
\begin{thmint}
    Let $(M, J, \omega)$ be an lck manifold and assume $\omega$ is Gauduchon satisfying the {second Bismut--Einstein equation}
    \begin{equation*}
        \Ric^{B, 2}=f \omega, \qquad \text{for some}\, f \in \mathcal{C}^{\infty}(M).
    \end{equation*}
    If $\mathrm{dim}_{\mathbb{C}}M\geq 4$, then $\omega$ is Vaisman and $f \equiv 0$.   If $\mathrm{dim}_{\mathbb{C}}M \in \{2, 3\}$ and $f \geq 0$, then $\omega$ is Vaisman and $f\equiv 0$. 
\end{thmint}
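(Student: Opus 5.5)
We want to prove that a Gauduchon lcK metric with $\Ric^{B,2}=f\omega$ is Vaisman with $f\equiv 0$. The plan is to write $\Ric^{B,2}$ explicitly in terms of the Lee form and then integrate. The setting is $d\omega=\theta\wedge\omega$ with $d\theta=0$; the Gauduchon condition reads $d^*\theta=0$, equivalently $d^*J\theta=0$.

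\textbf{Step 1: formula for $\Ric^{B,2}$.} The Bismut connection of an lcK metric has torsion $3$-form $d^c\omega=-J\theta\wedge\omega$, up to a sign convention. Using the standard comparison of the Bismut and Chern second Ricci forms, together with the formula for $\Ric^{Ch,2}$ of an lcK metric already used in Propositions \ref{prop: dRic Ch} and \ref{prop: ddc Ric ch}, we express
\[
\Ric^{B,2}=\rho+a_n\,dJ\theta+b_n\,\theta\wedge J\theta+c_n\,(\nabla\theta)^{1,1}+\big(e_n|\theta|^2+h_n\,d^*\theta\big)\omega .
\]
Here $\rho$ is the Ricci form of the local Kähler metrics $e^{-\varphi}\omega$, written in terms of $\omega$-quantities. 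The coefficients $a_n,b_n,c_n,e_n,h_n$ depend on $n$, and most of them should cancel by the known identity for $\Ric^{B,2}$ in terms of $\delta^c$ of the torsion. For a Vaisman metric ($\nabla\theta=0$, $|\theta|$ constant), this expression should reduce to $\Ric^{B,2}=0$ when the local Kähler metric is suitably Ricci-flat-compatible; this is the consistency check against Theorem \ref{th: main Ric=0} with $t=-1$.

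\textbf{Step 2: separate $J\theta$ from its complement.} Contract the equation $\Ric^{B,2}=f\omega$ with $\theta^\sharp$ and $J\theta^\sharp$, and take the trace against $\omega$. This yields two scalar equations, one for $f$ and one for the second Bismut scalar curvature. Subtracting them eliminates $\rho$ and gives an identity of the form
\[
(n-2)(n-3)\,\big|(\nabla\theta)^{sym}\big|^2+\text{divergence terms}=\kappa_n\,f\,|\theta|^2 .
\]
I expect the $J$-invariant part of $\nabla\theta$ (equivalently the $J$-anti-invariant part of $\mathcal L_{\theta^\sharp}g$) to appear with a coefficient that is positive once $n\ge4$; the dimension threshold $4$ in the statement suggests exactly this kind of factor.

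\textbf{Step 3: integrate.} Using $d^*\theta=0$, the divergence terms integrate to zero. For $n\ge4$, also integrate the trace equation. Combining the two integrated identities with positive weights should eliminate $f$ and give $\int|\nabla\theta|^2=0$. Hence $\theta$ is parallel and the metric is Vaisman. Then Step 1 gives $\Ric^{B,2}$ with no $\omega$-component, so $f\equiv0$.

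For $n\in\{2,3\}$ the positive coefficient degenerates. Instead we use $f\ge0$ directly: the integrated identity reads $\int f|\theta|^2=-C\int|\nabla\theta|^2$ with $C\ge0$. This forces both sides to vanish, so $\nabla\theta=0$ where relevant, and $f=0$ wherever $\theta\neq0$. On Vaisman manifolds $\theta$ has constant nonzero norm, so $f\equiv0$ everywhere.

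\textbf{Main obstacle.} The hard part is Step 2. We must get precise coefficients so that a nonnegative quantity controlling $\nabla\theta$ appears with the right sign for $n\ge4$, and so that $\rho$ genuinely drops out. First I would try working in local Kähler gauge: on the local Kähler metric $g_0=e^{-\varphi}g$ with $d\varphi=\theta$, $\rho$ is Kähler-Ricci, and the twisted contracted Bianchi identity $d^*\rho=-\tfrac12 d^c s_0$ provides the needed divergence relation. Applying $d^*$ to the Einstein equation and pairing with $J\theta$ should then produce the integral identity.
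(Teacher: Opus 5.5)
\textbf{There is a genuine gap.} The integral identity on which the whole argument rests is never derived, and the mechanism you propose for it cannot work.

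In Step 2 you contract $\Ric^{B,2}=f\omega$ with $(\theta^\sharp,J\theta^\sharp)$ and compare with the $\omega$-trace. This is a pointwise operation, and it cannot remove $\rho$, because $\rho(\theta^\sharp,J\theta^\sharp)$ and $\tr_\omega\rho$ are independent quantities. What subtraction actually removes is $f$. You are left with the component $n\rho(\theta^\sharp,J\theta^\sharp)-\omega(\theta^\sharp,J\theta^\sharp)\tr_\omega\rho$ of the trace-free Ricci form, which is not expressible through $\theta$ and its derivatives. So no identity of the form ``sign-definite term $+$ divergence $=\kappa_n f\|\theta\|^2$'' can come out of it. Likewise, integrating the trace equation in Step 3 only relates $\int_M f$ to the total Chern scalar curvature, which is uncontrolled, so it cannot eliminate $f$.

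The curvature must be removed by a differential identity followed by integration. The paper makes Step 1 explicit: it uses \eqref{eq:second-bismut} and \eqref{eq: RicCh RicLC} to get
\begin{equation*}
\Ric^g=2(\nabla\theta)^{1,1}-(n-1)\nabla\theta+\bigl(f+\tfrac{2n-3}{2}\|\theta\|^2\bigr)g-\tfrac{2n-3}{2}\,\theta\otimes\theta-\tfrac{n-2}{2}\,J\theta\otimes J\theta .
\end{equation*}
It then pairs the contracted Bianchi identity with $\theta$ and integrates, obtaining $\int_M\langle\Ric^g,\nabla\theta\rangle=0$. Here $f$ drops out for free, since $\langle fg,\nabla\theta\rangle=-f\,d^*\theta=0$ by the Gauduchon condition. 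Your closing remark about the Bianchi identity on the local K\"ahler metrics points in this direction, but it is left as a hope.

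\textbf{A second independent identity is also missing.} The Bianchi identity reads $0=2\int_M\|(\nabla\theta)^{1,1}\|^2-(n-1)\int_M\|\nabla\theta\|^2-\tfrac{n-2}{2}\int_M(\nabla_{J\theta^\sharp}\theta)(J\theta^\sharp)$, and its last term has no a priori sign. The paper supplies a second integral identity: the integrated form of the $L^2$-orthogonality of $dJ\theta$ and $d^*d\omega$ (\cite[Proposition~5]{adl}). Under $d^*\theta=0$ it yields $\int_M\|(\nabla\theta)^{1,1}\|^2=-\tfrac n2\int_M(\nabla_{J\theta^\sharp}\theta)(J\theta^\sharp)$. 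Combining the two gives $(n-1)\int_M\|\nabla\theta\|^2=\tfrac{3n-2}{n}\int_M\|(\nabla\theta)^{1,1}\|^2$. The dimension threshold is $n^2-4n+2>0$, i.e.\ $n\geq4$, not a factor like $(n-2)(n-3)$.

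\textbf{The case $f\geq 0$.} The identity $\int_M f\|\theta\|^2=-\tfrac{n-2}{n}\int_M\|(\nabla\theta)^{1,1}\|^2-\int_M\|(\nabla\theta)^{(2,0)+(0,2)}\|^2$ comes from the Bochner formula for the harmonic form $\theta$, combined with the same identity from \cite{adl}. At $n=2$ the first coefficient vanishes, so your claim that ``$C\geq0$ forces $\nabla\theta=0$'' fails there. You must also invoke the previous identity to conclude.

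\textbf{The conclusion $f\equiv0$.} This does not follow from Step 1, because $\nabla\theta=0$ does not determine the $\omega$-component of $\rho$. Use instead that $\theta^\sharp$ is $\nabla^B$-parallel on Vaisman manifolds. Then $\Ric^{B,2}(\theta^\sharp,J\theta^\sharp)=0$, so $f\|\theta\|^2=0$, and $\|\theta\|$ is a nonzero constant in the non-K\"ahler case.
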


Finally, we notice that second-Bismut--Einstein manifolds are {\em Einstein--Weyl} manifolds as well.
These were studied in \cite{gi, gau,gau97,iva99,pps} and many others.
In particular, in \cite{gi} Gauduchon and Ivanov proved that the only non-K\"ahler Einstein--Weyl
manifold of complex dimension $2$ is the {\em Hopf surface} with its standard Hermitian structure, while in \cite{pps} Pedersen, Poon, and Swann showed that, with some extra assumption on the regularity of the leaves induced by $\theta^\sharp,J\theta^\sharp$, it is
also possible to prove that the Einstein--Weyl manifold must be the total space of a toric holomorphic
fibration over a K\"ahler--Einstein manifold of positive curvature (emulating the structure of the
{\em Hopf manifolds}).
We therefore characterize the Hopf manifolds admitting these structures as those with diagonal holomorphic contraction with all eigenvalues of the same norm in Theorem \ref{th: hopf}.


\section{Preliminaries}\label{sec: prelim}
 
A complex manifold \((M, J)\) is called \textit{locally conformally K\"ahler} (lcK) if there exists a Hermitian metric \(g\) with fundamental two-form $\omega$ satisfying $d\omega = \theta \wedge \omega$, where \(\theta\) is a closed one-form called the \textit{Lee form}. Vaisman manifolds, introduced in \cite{vai} as generalized Hopf manifolds, admit an lcK metric for which the Lee form is parallel with respect to the Levi-Civita connection. Vaisman metrics satisfy
$$\|\theta\|^2\omega=\theta\wedge J\theta - dJ\theta,$$
while the lcK metrics which satisfy this relation with $\|\theta\|$ constant have to be Vaisman by \cite[Proposition 3.2.2]{i18}.

\medskip

In \cite{gau97} Gauduchon introduced a family of canonical Hermitian connections with prescribed torsion depending on a real parameter $t\in\R$.
Given a Hermitian manifold $(M,J,g)$, the {\em Gauduchon connection} of parameter $t\in\R$ is described with respect to the Levi–-Civita connection $\nabla^g$ as
\begin{equation}\label{eq: nabla t}
    \nabla^t = \nabla^g + \frac{t-1}{4}g^{-1}d^c\omega + \frac{t+1}{4}d^c\omega(\cdot,J\cdot,J\cdot.)
\end{equation}
For parameter $t=1$ we recover the {\em Chern connection} while the parameter $t=-1$ identifies the {\em Bismut connection}.
Because of the geometric relevance of the Chern and Bismut connections, we will respectively use the superscripts $^{Ch}$ and $^B$ for them and all the associated curvature tensors.
For lcK metrics, the relation \eqref{eq: nabla t} reduces to
$$\nabla^t_XY = \nabla^g_XY -\frac{t}{2}\theta(JX)JY + \frac{t-1}{4}\theta(JY)JX + \frac{t-1}{4}\omega(X,Y)J\theta^\sharp - \frac{t+1}{4}\theta(Y)X + \frac{t+1}{4}g(X,Y)\theta^\sharp $$
for any vector fields $X,Y$, where $\theta^\sharp$ is the metric dual of $\theta$.
In particular, we get that
\begin{equation}\label{eq: nabla t-Ch}
\nabla^t_XY = \nabla^{Ch}_XY + \frac{1-t}{4}\left( \theta(Y)X +J\theta(Y)JX -2J\theta(X)JY - g(X,Y)\theta^\sharp - \omega(X,Y)J\theta^\sharp \right).    
\end{equation}

For a given connection $\nabla$ we define its curvature tensor as $R^{\nabla} = [\nabla\cdot,\nabla\cdot]-\nabla_{[\cdot,\cdot]}$. With this convention, the {\em first} and {\em second Gauduchon--Ricci} curvature tensors are respectively given by the traces, with respect to a $g$-orthonormal basis $\{e_1,\ldots,e_{2n}\}$,
\begin{align*}
    \Ric^{t,1}(\cdot,\cdot) &= -\tfrac{1}{2}\sum_{i=1}^{2n}g(R^{\nabla^t}_{\cdot,\cdot}e_i,Je_i),&\Ric^{t,2}(\cdot,\cdot) &= -\tfrac{1}{2}\sum_{i=1}^{2n}g(R^{\nabla^t}_{e_i,Je_i}\cdot,\cdot).
\end{align*}
For lcK metrics, a long but straightforward computation based on \eqref{eq: nabla t-Ch} leads to the crucial relation
\begin{equation}\label{eq: Ric t,2}
    \Ric^{t,2} = \Ric^{Ch,1} +\frac{n+t-1}{2} dJ\theta + n\frac{(1-t)^2}{8}\theta\wedge J\theta +\left[ \frac{t}{2}d^*\theta + \left((n-1)\frac{t}{2} - \frac{(1-t)^2}{8}\right)\|\theta\|^2 \right]\omega .
\end{equation}
This can be specialized to the Bismut connection as
\begin{equation}\label{eq:second-bismut}
    \Ric^{B,2}=\Ric^{\mathrm{Ch}, 1}+\frac{n-2}{2}dJ\theta+\frac{n}{2}\theta\wedge J\theta-\frac{n}{2}\|\theta\|^2\omega.
\end{equation}
We remark that the first Gauduchon--Ricci form is not $J$-invariant for a general Hermitian metric, but it is for lcK metrics since we have the following general relation
\begin{equation}\label{eq: Ric t1 Ch1}
    \Ric^{t,1} = \Ric^{Ch,1} +\frac{1-t}{2}(n-1)dJ\theta
\end{equation}
and $dJ\theta$ is a $(1,1)$-form for lcK metrics.
Because of this relation, the forms $\Ric^{t,1}$ are all closed and give representatives of the first Chern class of $(M,J)$.
However, this also implies that for non-K\"ahler manifolds the Einstein problem stated for the first Ricci form of lcK metrics reduces to the vanishing of these, which was studied in \cite{bo}.
\begin{prop}\label{prop: firstRicci} 
    If $\omega$ is an lcK metric on a compact complex manifold $(M, J)$ of $\mathrm{dim}_{\mathbb{C}}\geq 3$ such that $\Ric^{t,1}=f\omega$ for some smooth function $f$ on $M$, then either $f\equiv 0$ or $\omega$ is globally conformally K\"ahler and $f$ has a sign.
\end{prop}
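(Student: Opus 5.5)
The plan is to exploit that $\Ric^{t,1}$ is closed, by \eqref{eq: Ric t1 Ch1}: it differs from $\Ric^{Ch,1}$ by a multiple of the exact form $dJ\theta$. So $\Ric^{t,1}=f\omega$ gives $d(f\omega)=0$, that is
\begin{equation*}
df\wedge\omega + f\,\theta\wedge\omega = (df+f\theta)\wedge\omega = 0 .
\end{equation*}
Since $\dim_{\mathbb C}M=n\geq 2$, wedging with $\omega$ is injective on $1$-forms (Lefschetz), so $df=-f\theta$ on all of $M$.

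Next I would show that $f$ either vanishes identically or never vanishes. Along any curve $\gamma$, the function $u=f\circ\gamma$ solves the linear ODE $u'=-\theta(\dot\gamma)\,u$. Hence if $f(p)=0$, then $f$ vanishes along every curve through $p$. The zero set is therefore open; it is closed by continuity, so since $M$ is connected (implicit in the setting), either $f\equiv0$ or $f$ has no zeros. In the latter case $f$ has constant sign, and $\theta=-d\log|f|$ is exact. Then $e^{\log|f|}\omega=|f|\omega$ satisfies
\begin{equation*}
d(|f|\omega)=|f|\left(d\log|f|+\theta\right)\wedge\omega=0,
\end{equation*}
so $|f|\omega$ is Kähler and $\omega$ is globally conformally Kähler.

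I do not expect a real obstacle. The only delicate points are the injectivity of $\wedge\omega$ on $1$-forms and the unique-continuation argument for the zero set, both of which are elementary. The hypothesis $\dim\geq3$ and compactness are not needed for this argument. They may matter only if one further wants to say more, for instance that in the gcK case the Kähler metric $|f|\omega$ is Einstein-like. Indeed, $\Ric^{t,1}$ equals the Chern Ricci form of $|f|\omega$ plus exact corrections, and $f\omega=\pm(|f|\omega)$, so the result is a Kähler–Einstein-type equation with constant $\pm1$ for the Kähler metric. For the statement as given, the argument above suffices.
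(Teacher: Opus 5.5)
Your argument is correct and is the paper's proof. Closedness of $\Ric^{t,1}$ from \eqref{eq: Ric t1 Ch1} gives $(df+f\theta)\wedge\omega=0$, hence $df+f\theta=0$, and then either $f\equiv0$ or $\theta$ is exact; the paper takes this dichotomy from \cite[Lemma 2.2.9]{i18}, while you prove it directly with the ODE/open--closed argument. You are also right that injectivity of $\omega\wedge\cdot$ on $1$-forms needs only $n\geq2$, and that compactness is not used beyond connectedness.
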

\begin{proof}
    Since $d\Ric^{t,1}=0$, for any $t \in \mathbb{R}$, we have $0=d(f \omega)=(df+f\theta) \wedge \omega$.
    As $\mathrm{dim}_{\mathbb{C}}\geq 3$, we have that the Lefschetz operator $\omega \wedge \cdot: \Omega^2(M) \rightarrow \Omega^4(M)$ is injective, hence $df+f\theta=0$. By \cite[Lemma 2.2.9]{i18}, either $f\equiv0$ or $\theta$ is exact, meaning that $\omega$ is globally conformally K\"ahler. 
    In the latter case, $\theta=dh$ for some function $h$, and $f=Ce^{-h}$ for some constant $C$.
\end{proof}
\noindent On the other hand, the second Ricci forms are not closed in general, not even for lcK metrics, and their closeness gives strong geometric restrictions which we study in this paper.
Moreover, we briefly show here that, unless the metric is K\"ahler, they distinguish all the connections in the Gauduchon family. 
\begin{prop}
    Let $(M,J,\omega)$ be a compact lcK manifold. If for two distinct parameters $t,s\in\mathbb R$,
    \begin{equation*}
        \Ric^{t,2}=\Ric^{s,2},
    \end{equation*}
    then $\omega$ is K\"ahler.
\end{prop}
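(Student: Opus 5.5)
The plan is to reduce everything to an identity involving only the Lee form, using the explicit formula \eqref{eq: Ric t,2}. Subtracting the two instances of \eqref{eq: Ric t,2} for the parameters $t$ and $s$ cancels $\Ric^{Ch,1}$ and leaves
\begin{equation*}
0=\frac{t-s}{2}\,dJ\theta+\frac{n}{8}\big[(1-t)^2-(1-s)^2\big]\theta\wedge J\theta+\Big[\frac{t-s}{2}d^*\theta+\Big(\frac{(n-1)(t-s)}{2}-\frac{(1-t)^2-(1-s)^2}{8}\Big)\|\theta\|^2\Big]\omega .
\end{equation*}
We have $(1-t)^2-(1-s)^2=(s-t)(2-t-s)$, and $t\neq s$. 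So dividing by $(t-s)/2$ and setting $a:=\frac{2-t-s}{4}$, I expect an identity of the form
\begin{equation*}
dJ\theta-\frac{na}{2}\,\theta\wedge J\theta+\Big[d^*\theta+\Big(n-1+\frac{a}{2}\Big)\|\theta\|^2\Big]\omega=0. \tag{$\ast$}
\end{equation*}
The goal is to show that $(\ast)$ forces $\theta=0$ on a compact manifold.

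\emph{Step 1 (trace).} Contract $(\ast)$ with $\omega$, i.e.\ apply $\Lambda$ or wedge with $\omega^{n-1}$. For an lcK metric, $\Lambda(dJ\theta)$ is an explicit combination $\alpha\, d^*\theta+\beta\|\theta\|^2$; this follows since $d^c$-type identities give $\Lambda dJ\theta=\pm d^*\theta+(n-1)\|\theta\|^2$ up to the paper's sign conventions. Also $\Lambda(\theta\wedge J\theta)=\pm\|\theta\|^2$ and $\Lambda\omega=n$. This yields a pointwise scalar relation $c_1(a)\,d^*\theta+c_2(a)\|\theta\|^2=0$. Integrating over the compact $M$ kills $\int d^*\theta$ and gives $c_2(a)\int_M\|\theta\|^2=0$. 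Whenever $c_2(a)\neq0$ we get $\theta\equiv0$, so $\omega$ is Kähler.

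\emph{Step 2 (exceptional values).} The main obstacle is the finitely many values of $a$, i.e.\ of $t+s$, for which $c_2(a)=0$. There I would extract more information from $(\ast)$ than its trace. First, evaluate $(\ast)$ on vectors orthogonal to $\mathrm{span}\{\theta^\sharp,J\theta^\sharp\}$ (possible since $n\geq2$ wherever $\theta\neq0$), where $\theta\wedge J\theta$ vanishes. This gives the form of $dJ\theta$ restricted to that complement. Second, evaluate $(\ast)$ on the pair $(\theta^\sharp,J\theta^\sharp)$. Combining these with the trace identity should give a second independent scalar relation between $d^*\theta$ and $\|\theta\|^2$.

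If that is still degenerate, I would instead apply $d$ to $(\ast)$. Using $d\theta=0$, $d(\theta\wedge J\theta)=-\theta\wedge dJ\theta$ and $d\omega=\theta\wedge\omega$, and substituting $dJ\theta$ from $(\ast)$, we obtain $(dh+h\theta)\wedge\omega=0$ for $h:=d^*\theta+(n-1+\tfrac a2)\|\theta\|^2$ plus a multiple of $\theta\wedge\theta\wedge J\theta=0$. In dimension $\geq3$, Lefschetz injectivity as in Proposition \ref{prop: firstRicci} gives $dh+h\theta=0$. Hence either $h\equiv0$, or $\theta$ is exact, and then $\theta=0$ because an exact Lee form on an lcK manifold whose metric is globally conformally Kähler still needs a separate argument.

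In the case $h\equiv0$, $(\ast)$ becomes $dJ\theta=\frac{na}{2}\theta\wedge J\theta$. Wedging with $\omega^{n-1}$ and integrating, using Stokes on $d(J\theta\wedge\omega^{n-1})$, then yields $\int\|\theta\|^2\omega^n$ times a nonzero constant, hence $\theta=0$. Dimension $2$ needs a separate check of this last step, and I expect the bookkeeping of signs and constants in these exceptional cases to be the delicate part.
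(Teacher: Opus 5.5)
Your Step 1 is exactly the paper's argument, but you stop before the one computation that decides the matter. As written this leaves a gap: you never check that $c_2(a)\neq 0$, and instead you postulate exceptional values of $t+s$ that you then try to handle in Step 2. No such values exist.

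First a harmless slip: with $a=\frac{2-t-s}{4}$, the coefficients in $(\ast)$ should be $-na$ and $n-1+a$. Only the ratio $-n:1$ between them matters below. The $a$-dependent part of $(\ast)$ is $a\bigl(\|\theta\|^2\omega-n\,\theta\wedge J\theta\bigr)$, and it is trace-free. In the paper's conventions $\theta\wedge J\theta$ is semipositive, so $\tr_\omega(\theta\wedge J\theta)=+\|\theta\|^2$, while $\tr_\omega\omega=n$. The sign you left as ``$\pm$'' is precisely what decides this. Using $\tr_\omega dJ\theta=-d^*\theta-(n-1)\|\theta\|^2$, the trace of $(\ast)$ is
\[
(n-1)\bigl(d^*\theta+(n-1)\|\theta\|^2\bigr)=0
\]
for every pair $t\neq s$. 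So $c_2=(n-1)^2>0$, and integrating gives $\theta\equiv 0$ at once; this is the paper's proof.

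There is also a conceptual reason the trace cannot see $t+s$. One has $\tr_\omega\Ric^{t,2}=\tr_\omega\Ric^{t,1}$, since both are the same double trace of $R^{\nabla^t}$. By \eqref{eq: Ric t1 Ch1}, the first Ricci forms differ only by $\frac{s-t}{2}(n-1)\,dJ\theta$.

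Step 2 should simply be dropped, and as written it would not have closed the argument anyway.
\begin{itemize}
\item When you differentiate $(\ast)$ and substitute $dJ\theta$ back, the term $\frac{na}{2}\theta\wedge dJ\theta$ contributes $-\frac{na}{2}h\,\theta\wedge\omega$. You therefore get $\bigl(dh+(1-\frac{na}{2})h\theta\bigr)\wedge\omega=0$, not $(dh+h\theta)\wedge\omega=0$.
\item The alternative ``$\theta$ exact'' only yields a globally conformally K\"ahler metric, and you explicitly leave the remaining argument open.
\item In the case $h\equiv 0$, the constant produced by Stokes (in your normalization) is $\frac{na}{2}+n-1$. This vanishes at $a=-\frac{2(n-1)}{n}$, so it is not automatically nonzero.
\end{itemize}
Also, injectivity of $\omega\wedge\cdot$ on $1$-forms already holds for $n\geq 2$, so your restriction to dimension $\geq 3$ there is unnecessary.
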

\begin{proof} 
    Using \eqref{eq: Ric t,2} and $\Ric^{t,2}=\Ric^{s,2}$ for $s \neq t$, we obtain
    \begin{equation*}
        \frac{1}{2}dJ\theta+\frac{n(t+s-2)}{8}\theta\wedge J\theta+\left( \frac{1}{2}d^*\theta+\left(\frac{n-1}{2}-\frac{t+s-2}{8}\right) \|\theta\|^2\right) \omega=0.
    \end{equation*}
    Taking the trace with respect to $\omega$ and using that $\tr_\omega dJ\theta=-d^*\theta - (n-1)\|\theta\|^2$, we obtain
    \begin{equation*}
        \frac{n-1}{2}\left(d^*\theta+(n-1)\|\theta\|^2\right)=0,
    \end{equation*}
    which gives by integration $\theta \equiv 0$.
\end{proof}

\medskip

The {\em Weyl connection} $\nabla^W$ is defined as the unique torsion free connection that preserves the conformal structure of the metric, that is $\nabla^Wg = \theta\otimes g$.
Because of this property, on an lcK manifold, it is a complex connection, i.e. $\nabla^W J=0$.
We consider the {\em Weyl curvature form} defined by 
$$\Ric^{W,J}(\cdot,\cdot) = -\tfrac{1}{2}\sum_{i=1}^{2n}g(R^{\nabla^W}_{e_i,Je_i}\cdot,\cdot).$$
This $2$-form is not $J$-invarian in general, but it is so when $\nabla^W$ preserves $J$, therefore on lcK manifolds.
In particular, for this clas of manifold, being {\em Einstein--Weyl} means that $\Ric^{W,J}$ is a multiple of $\omega$.
Moreover, as showed in \cite[Equation~(1)]{bo} the following useful relation holds
\begin{equation}\label{Weyl-form}
    \Ric^{W, J}=\frac{n}{2}dJ\theta + \Ric^{Ch,1}(\omega).
\end{equation}
Therefore, going through the Weyl curvature, i.e. using \eqref{Weyl-form} and \cite[Theorem~1.159]{besse}, it is also possible to relate the {\em Riemannian Ricci tensor} $\Ric^g$ with the Chern--Ricci tensor.
This equation for lcK metrics reads as:
\begin{equation}\label{eq: RicCh RicLC}
{\mathrm{Ric}}^{\mathrm{Ch}, 1}(\cdot,J\cdot) + \frac{n}{2}dJ\theta(\cdot,J\cdot) =\Ric^g -\frac{1}{2}(d^*\theta + (n-1)\|\theta\|^2)g+(n-1)\nabla\theta+\frac{n-1}{2} \theta \otimes \theta.
\end{equation}

\section{Second Chern--Ricci form of lcK metrics}
 
This section is dedicate to the study of the geometric properties of the Chern connection of lcK metrics.
More specifically, we focus on the cohomological properties of the second Chern--Ricci form and the second-Chern--Einstein equation.

\subsection{Cohomological conditions on the second Chern--Ricci form}

\begin{prop}\label{prop: dRic Ch}
    Let $(M,J,\omega)$ be a compact lcK manifold. If $dRic^{Ch,2}=0$, then $\omega$ is K\"ahler.
\end{prop}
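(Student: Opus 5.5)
The plan is to specialize the general formula \eqref{eq: Ric t,2} to the Chern connection and reduce the hypothesis to a scalar equation. Setting $t=1$ in \eqref{eq: Ric t,2} gives
\begin{equation*}
\Ric^{Ch,2}=\Ric^{Ch,1}+\frac{n}{2}dJ\theta+h\,\omega,\qquad h:=\frac{1}{2}\left(d^*\theta+(n-1)\|\theta\|^2\right).
\end{equation*}
The forms $\Ric^{Ch,1}$ and $dJ\theta$ are closed. Hence $d\Ric^{Ch,2}=d(h\omega)=(dh+h\theta)\wedge\omega$.

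Next we remove the wedge with $\omega$. The Lefschetz map $\omega\wedge\cdot:\Omega^1(M)\to\Omega^3(M)$ is injective as soon as $n\geq 2$. The case $n=1$ is trivial, since every Hermitian metric on a curve is K\"ahler. So the hypothesis gives $dh+h\theta=0$. As in the proof of Proposition \ref{prop: firstRicci}, \cite[Lemma 2.2.9]{i18} then says that either $h\equiv0$, or $\theta=d\varphi$ is exact and $h=Ce^{-\varphi}$ for a constant $C$. In the first case, integrating $d^*\theta+(n-1)\|\theta\|^2=0$ over the compact manifold kills the divergence term and gives $\theta\equiv0$. Then $\omega$ is K\"ahler.

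The remaining case, the globally conformally K\"ahler one, is the only real obstacle. We must show $C=0$, which returns us to the first case. I would use weighted integration by parts. For $\theta=d\varphi$ and any $a\in\R$ we have
\begin{equation*}
\int_M e^{a\varphi}d^*\theta=\int_M g\left(\theta,d e^{a\varphi}\right)=a\int_M e^{a\varphi}\|\theta\|^2 .
\end{equation*}
Choosing $a=-(n-1)$ gives
\begin{equation*}
\int_M e^{-(n-1)\varphi}\left(d^*\theta+(n-1)\|\theta\|^2\right)=0 .
\end{equation*}
The left-hand side equals $2\int_M e^{-(n-1)\varphi}h=2C\int_M e^{-n\varphi}$. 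Therefore $C=0$, so $h\equiv0$, and the first case yields $\theta\equiv0$. Hence $\omega$ is K\"ahler.
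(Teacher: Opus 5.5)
Your proof is correct, and up to the globally conformally K\"ahler case it is the paper's proof. In both, you specialize \eqref{eq: Ric t,2} to $t=1$, reduce the hypothesis to $(dh+h\theta)\wedge\omega=0$, use injectivity of $\omega\wedge\cdot$ on $1$-forms together with \cite[Lemma 2.2.9]{i18}, and dispose of the case $h\equiv0$ by integration.

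The only difference is how $C\neq0$ is excluded when $\theta=d\varphi$.
\begin{itemize}
\item The paper reads the relation as the equation $d^*d\varphi=(\mathrm{const})\,e^{-\varphi}+(1-n)\|d\varphi\|^2$ for the potential, and applies the maximum principle at extremal points of $\varphi$. A maximum alone only gives $C\geq0$; a minimum is also needed to get $C\leq0$.
\item You observe instead that $e^{-(n-1)\varphi}\bigl(d^*\theta+(n-1)\|\theta\|^2\bigr)=d^*\bigl(e^{-(n-1)\varphi}\theta\bigr)$ is a divergence. This forces $C\int_M e^{-n\varphi}=0$ in one global step.
\end{itemize}

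Your weight $e^{-(n-1)\varphi}$ is exactly the one that rewrites the condition as $\Delta_{g_K}\varphi=2C$ for the K\"ahler metric $g_K=e^{-\varphi}g$, which makes $C=0$ transparent. You also treat $n=1$ explicitly, where injectivity of $\omega\wedge\cdot$ on $1$-forms fails; the paper leaves this case implicit.
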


\begin{proof} 
    Replacing the parameter $t=1$ in \eqref{eq: Ric t,2} and taking the differential we see that $dRic^{Ch,2}=0$ amounts to
    $$(df+f\theta) \wedge \omega=0,$$
    where 
    $$ f=d^{*}\theta+(n-1)\|\theta\|^2. $$
    Since $\omega\wedge\cdot: \Omega^1(M) \rightarrow \Omega^3(M)$, is injective, we get $df+f\theta=0$,
    which implies by \cite[Lemma 2.2.9]{i18} that either $f\equiv0$ or $\theta$ is exact. 
    If $f\equiv0$ then by integrating it we get $\|\theta\|^2=0$ and hence $\omega$ is K\"ahler.
    On the oder hand, if $\theta=dh$ for some function $h$, then $f=Ce^{-h}$ for some constant $C\geq0$.
    We would then get
    $$\Delta h = d^*dh = f +(1-n)\|\theta\|^2 = Ce^{-h} + (1-n)\|dh\|^2.$$
    Evaluating this equation on a maximal point of $h$ we deduce that $C=0$, implying again that $\omega$ is K\"ahler.
\end{proof}

\begin{prop}\label{prop: ddc Ric ch}
    Let $(M, J, \omega)$ be a compact lcK manifold of $\mathrm{dim}_{\mathbb{C}}M \geq 3$. If $dd^cRic^{Ch, 2}=0$, then $\omega$ is K\" ahler. 
\end{prop}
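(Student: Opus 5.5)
Setting $t=1$ in \eqref{eq: Ric t,2} gives
$$\Ric^{Ch,2}=\Ric^{Ch,1}+\frac{n}{2}dJ\theta+\frac12 f\,\omega,\qquad f=d^*\theta+(n-1)\|\theta\|^2 .$$
We work with this decomposition term by term. Since $\Ric^{Ch,1}$ is closed, it is killed by $dd^c$. The form $dJ\theta$ is exact, and $d^c = JdJ^{-1}$ up to sign convention, so $dd^c(dJ\theta)=-d^c d\,dJ\theta=0$. Hence the hypothesis $dd^c\Ric^{Ch,2}=0$ reduces to
$$dd^c(f\omega)=0 .$$

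Next we expand $dd^c(f\omega)$ using $d\omega=\theta\wedge\omega$ and $d^c\omega=J\theta\wedge\omega$ (up to sign). We get $d^c(f\omega)=(d^cf+fJ\theta)\wedge\omega$. Applying $d$ and using $d\omega=\theta\wedge\omega$ gives
$$dd^c(f\omega)=\big(dd^cf+d(fJ\theta)-(d^cf+fJ\theta)\wedge\theta\big)\wedge\omega .$$
For $n\ge3$, the Lefschetz map $\omega\wedge\cdot:\Omega^2\to\Omega^4$ is injective, as already used in Proposition \ref{prop: firstRicci}. Therefore
$$\Phi:=dd^cf+df\wedge J\theta+f\,dJ\theta-d^cf\wedge\theta-fJ\theta\wedge\theta=0 .$$
This replaces the first-order equation $df+f\theta=0$ of Proposition \ref{prop: dRic Ch} by a second-order one.

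To extract a scalar equation we trace $\Phi$ with $\omega$. The term $\tr_\omega dd^cf$ is $\pm\Delta f$ (a multiple of the Laplacian). The mixed terms $df\wedge J\theta-d^cf\wedge\theta$ trace to a multiple of $g(df,\theta)$. Using $\tr_\omega dJ\theta=-f$ and $\tr_\omega(\theta\wedge J\theta)=\|\theta\|^2$, we arrive at an equation of the form
$$\Delta f + c\,g(df,\theta) + f\big(-f+\|\theta\|^2\big)=0$$
with explicit constants. Since $-f+\|\theta\|^2=-d^*\theta-(n-2)\|\theta\|^2$, the zeroth-order coefficient is $-d^*\theta-(n-2)\|\theta\|^2$.

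The plan is then a maximum principle argument, which I expect to be the main obstacle. At a maximum point of $f$, the terms $df$ and $\Delta f$ have definite signs. We must show that this forces $f\le 0$ there, and similarly control the minimum. We would then combine this with $\int_M f\,dV=(n-1)\int_M\|\theta\|^2dV\ge0$, which forces $f\equiv0$ and hence, integrating, $\theta=0$.

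If the pointwise signs do not close, the fallback is to multiply the traced equation by $f$ (or pair $\Phi$ with $\omega^{n-2}$) and integrate. Integrating by parts and using $d^*\theta$ to absorb the drift term should give a sum of nonnegative terms equal to zero, namely $\int\|df\|^2$ plus a multiple of $(n-2)\int f^2\|\theta\|^2$. The factor $n-2>0$ is exactly where $n\ge3$ enters a second time. The resulting vanishing gives $f$ constant and then $f\|\theta\|^2=0$. Combined with $\int f=(n-1)\int\|\theta\|^2$, this forces $\theta\equiv0$, so $\omega$ is K\"ahler.
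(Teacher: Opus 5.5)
Your reduction to $dd^c(f\omega)=0$ and to the $2$-form equation $\Phi=0$ is correct and agrees with the paper. The analytic step, however, has a genuine gap. You run it for the original lcK metric $\omega$, which is not assumed Gauduchon, and in that gauge neither of your two arguments closes.

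The paper's identity $\tr_\omega dJ\theta=-d^*\theta-(n-1)\|\theta\|^2$ is the case $\alpha=\theta$ of $\tr_\omega dJ\alpha=-d^*\alpha-(n-1)\,g(\alpha,\theta)$, valid for every $1$-form $\alpha$. This follows from $d^*\alpha\,\mathrm{vol}_g=-d\star\alpha$ and $d\omega^{n-1}=(n-1)\theta\wedge\omega^{n-1}$. So $\tr_\omega dJdf$ also produces a drift term, and with $\Delta=d^*d$ the trace of $\Phi=0$ is
\[
\Delta f+(n-3)\,g(df,\theta)+f\,d^*\theta+(n-2)f\|\theta\|^2=0 .
\]
Using $d^*\theta=f-(n-1)\|\theta\|^2$, at a maximum of $f$ this only says $f(f-\|\theta\|^2)\le 0$, which is no contradiction.

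Multiplying by $f$ and integrating, the term $f\,d^*\theta$ contributes $\int_M f^2d^*\theta=2\int_M f\,g(df,\theta)$. This adds to the drift instead of cancelling it:
\[
\int_M\|df\|^2\,\mathrm{vol}_g+(n-1)\int_M f\,g(df,\theta)\,\mathrm{vol}_g+(n-2)\int_M f^2\|\theta\|^2\,\mathrm{vol}_g=0 .
\]
Equivalently, $\int_M\|df+\tfrac{n-1}{2}f\theta\|^2=\tfrac{(n-3)^2}{4}\int_M f^2\|\theta\|^2$, which forces nothing to vanish when $n\geq 4$. The sum of nonnegative terms you anticipate appears only when $d^*\theta=0$. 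In other words, ``using $d^*\theta$ to absorb the drift term'' silently assumes that $\omega$ is Gauduchon.

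The missing idea is the conformal gauge fixing used in the paper. Let $\omega_0=e^{-h}\omega$ be the Gauduchon metric in the conformal class. It is lcK with Lee form $\theta_0=\theta-dh$ and $d^*_{g_0}\theta_0=0$. Since $f\omega=\tilde f\omega_0$ with $\tilde f=e^hf$, the hypothesis reads $dd^c(\tilde f\omega_0)=0$.

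Running your computation for $(\omega_0,\tilde f)$, the zeroth-order coefficient becomes $(n-2)\|\theta_0\|^2$. Every first-order term, paired with $\tilde f$, integrates to a multiple of $\int_M\tilde f^2 d^*_{g_0}\theta_0\,\mathrm{vol}_{g_0}=0$, so its exact coefficient is irrelevant there. This gives $\int_M\bigl(\|d\tilde f\|^2+(n-2)\tilde f^2\|\theta_0\|^2\bigr)\mathrm{vol}_{g_0}=0$. In your original gauge this amounts to using the multiplier $e^{(3-n)h}f$ instead of $f$. That is why your unweighted identity does close for $n=3$, where it reads $\int_M\|df+f\theta\|^2=0$.

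Your endgame must also change, because it is $\tilde f$, not $f$, that comes out constant. If $\tilde f\equiv 0$, then $f\equiv0$ and integrating gives $\theta=0$. Otherwise $\theta_0\equiv 0$, so $\theta=dh$, and the constant $\tilde f=e^h\bigl(\Delta h+(n-1)\|dh\|^2\bigr)$ is $\geq 0$ at a maximum of $h$ and $\leq 0$ at a minimum. Hence it vanishes after all, and again $\theta=0$.
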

\begin{proof} 
    Taking $t=1$ in \eqref{eq: Ric t,2} and using that $\Ric^{Ch, 1}$ is $dd^c$-closed, we obtain by the assumption $dd^cRic^{Ch, 2}=0$ that
    \begin{equation*}
        dd^c(f\omega)=0,
    \end{equation*}
    where $f=d^*\theta+(n-1)\|\theta\|^2$.
    We consider the Gauduchon representative $\omega_0 = e^{-h}\omega$ in the conformal class of $\omega$, for which the equation above gives $dd^c(\tilde f\omega_0)=0$, with $\tilde f=e^hf$. 
    Thus using $d\omega_0 = \theta_0 \wedge \omega_0$ we get
    \begin{equation*}\label{eq: ddc}
       dJd\tilde f - Jd \tilde f \wedge \theta_0 + d\tilde f \wedge J\theta_0 + \tilde f dJ\theta_0 + \tilde f \theta_0 \wedge J\theta_0 = 0,
    \end{equation*}
    as a consequence of $\omega_0 \wedge \cdot$ being injective since $\mathrm{dim}_{\mathbb{C}}\geq 3$.
    We now trace this equation with respect to $\omega_0$. 
    We use that $\omega_0$ is Gauduchon and hence $\tr_{\omega_0} dJ\theta_0 = -(n-1)\|\theta_0\|^2$, $ \tr_{\omega_0}(Jd\tilde f \wedge \theta_0)=-\tr_{\omega_0}(d\tilde f \wedge J\theta_0)$, and $\tr_{\omega_0} dJd\tilde f = -\Delta^{g_0} \tilde f$ thanks to \cite[Page 502]{gau84}.
    We thus obtain 
    \begin{equation*}
        -\Delta^{g_0}\tilde f + \theta_0^{\sharp}(\tilde f) - (n-2)\tilde f\|\theta_0\|^2 =0.
    \end{equation*}
    We multiply this equation by $\tilde f$ and integrate it, getting
    \[
    0= \int_M \tilde f\Delta^{g_0}\tilde f  + (n-2)\tilde f^2\|\theta_0\|^2 \,\mathrm{vol}_{g_0} = \int_M \|d\tilde f\|^2  + (n-2)\tilde f^2\|\theta_0\|^2 \,\mathrm{vol}_{g_0}
    \]
    since $\int_M \tilde f\theta_0^{\sharp}(\tilde f)\mathrm{vol}_{g_0} = \tfrac{1}{2}\int_M \tilde f^2 d^*\theta_0=0$.
    Therefore, $\tilde f$ is constant and $\tilde f\|\theta_0\|^2=0$.
    Suppose that $\theta_0=0$. Then $\theta=dh$ and we get
    $$\tilde f = e^h\left( d^*dh +(n-1)\|dh\|^2\right).$$
    Evaluating it on a maximal and a minimal point of $h$ we see that $\tilde f$ has to vanish. Therefore, $\theta=0$ and $\omega$ is K\"ahler.
\end{proof}

\subsection{second-Chern--Einstein metrics on lcK manifolds}
In order to study second-Chern--Einstein manifolds we first notice that they are Einstein--Weyl as well.
As a matter of fact, when specialized for the Chern connection, the identities \eqref{eq: Ric t,2} and \eqref{Weyl-form} give
\begin{equation*}\label{eq: Einstein Chern Weyl}
    \Ric^{Ch,2} = \Ric^{W,J} +\left( \frac{1}{2}d^*\theta + \frac{n-1}{2}\|\theta\|^2 \right)\omega .
\end{equation*}
It follows that, for lcK metrics, the second-Chern--Einstein condition and the Einstein--Weyl condition are equivalent.
Remarkably, the authors of \cite{gi,pps} proved that these metrics have to be Vaisman.

\medskip

We determine here which Hopf manifolds admit a second-Chern--Einstein metric, or equivalently an Einstein--Weyl structure. 
Before doing that, let's briefly recall that primary Hopf manifolds are complex manifolds constructed as quotients of $\mathbb{C}^n \setminus \{0\}$ by an infinite cyclic group generated by a holomorphic contraction. 
They are known to carry lcK metrics by \cite{go98, ov16, ov23, ko}, while they are Vaisman only when their fundamental group is generated by a diagonal contraction $\gamma:\mathbb{C}^n \setminus \{0\} \rightarrow \mathbb{C}^n \setminus \{0\}$, $\gamma(z_1, \ldots, z_n)=(\lambda_1 z_1, \ldots, \lambda_n z_n)$, see e.g. \cite{io25,bel00}. 
Therefore, we shall only consider diagonal contractions.
We characterize second-Chern--Einstein metrics on these manifolds as follows. 

\begin{theo}\label{th: hopf}
Let $X=(\mathbb{C}^{n}\setminus\{0\})/\langle\gamma\rangle$, $n\geq 2$, 
be a primary Hopf manifold. Then $X$ admits a Einstein--Weyl lcK metric if and only if $\gamma$ is holomorphically conjugate to a diagonal contraction
$$d_{\lambda}(z_1,\ldots,z_n)=(\lambda_1z_1,\ldots,\lambda_nz_n), \qquad 0<|\lambda_j|<1,$$
such that
$$|\lambda_1|=\cdots=|\lambda_n|.$$
\end{theo}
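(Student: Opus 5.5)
We prove the two implications separately, working throughout with the identity $\Ric^{Ch,2}=\Ric^{W,J}+\tfrac12\left(d^*\theta+(n-1)\|\theta\|^2\right)\omega$, under which Einstein--Weyl and second-Chern--Einstein are the same condition for lcK metrics.

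\emph{Sufficiency.} If $\gamma$ is conjugate to $d_\lambda$ with $|\lambda_j|=r<1$ for all $j$, write $\lambda_j=r e^{i\phi_j}$. Then $d_\lambda$ is the composition of the homothety $z\mapsto rz$ with a unitary diagonal map. The standard Boothby-type metric
$$\omega_0=\frac{dd^c|z|^2}{|z|^2}$$
on $\C^n\setminus\{0\}$ is invariant under both, so it descends to $X$. This metric is Vaisman with Lee form $\theta=-d\log|z|^2$. Its Weyl connection is the Levi-Civita connection of the flat K\"ahler metric $dd^c|z|^2$ on the cover, which lies in the same conformal class. That connection has vanishing curvature, so $\Ric^{W,J}=0$ by \eqref{Weyl-form}. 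Hence $\omega_0$ is Einstein--Weyl with Einstein factor $0$, and it is also second-Chern--Einstein with $f=\tfrac12(d^*\theta+(n-1)\|\theta\|^2)$, which is constant for Vaisman metrics.

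\emph{Necessity.} Suppose $X$ carries an Einstein--Weyl lcK metric. By \cite{gi,pps} (as recalled above) the metric is Vaisman, so by \cite{io25,bel00} we may take $\gamma=d_\lambda$ diagonal. We then argue as follows.
\begin{enumerate}
\item Pass to the K\"ahler cover $\C^n\setminus\{0\}$ with K\"ahler potential $\varphi$, where $\tilde\omega=dd^c\varphi$ and $\gamma^*\varphi=c\varphi$ for some constant $c$. Being Vaisman means the Lee field generates a holomorphic flow of homotheties commuting with $\gamma$.
\item The Einstein--Weyl condition is $\Ric^{W,J}=\Ric(\tilde\omega)=\mu\,\omega$. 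Here $\Ric(\tilde\omega)$ is the Ricci form of the cover's K\"ahler metric, and $\omega=\tilde\omega/\varphi$ up to normalization. The constant $\mu$ is forced to be a constant times a power of the conformal factor.
\item Use the Vaisman structure to write the cover as a cone: $\tilde\omega$ is a K\"ahler cone metric over a Sasakian manifold $S$, with $\varphi=\rho^2$. The Einstein--Weyl equation says that $\Ric(\tilde\omega)$ is a multiple of $\tilde\omega/\rho^2$. For a cone this forces the transverse K\"ahler metric to be K\"ahler--Einstein and the cone to be Ricci-flat, i.e. $S$ is Sasaki--Einstein, which is the structure found in \cite{pps}.
\item A Ricci-flat K\"ahler cone structure on $\C^n\setminus\{0\}$ extending over the origin as a cone is flat $\C^n$. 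Granting this, the cone is flat and $S$ is the round sphere.
\item The radial homothety $\gamma$ is then an isometry of $\C^n$ up to scaling, i.e. $\gamma=rU$ with $U$ unitary. Diagonalizing simultaneously with $d_\lambda$ gives $|\lambda_j|=r$ for all $j$.
\end{enumerate}

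\emph{Main obstacle.} The hard step is step 4, showing that the Ricci-flat cone on $\C^n\setminus\{0\}$ must be flat. The Reeb field comes from the Lee flow and may be a quasi-regular or irregular weighted field, possibly with weights not all equal. We would first try the following argument. The K\"ahler--Einstein transverse structure has positive curvature, so its Reeb weights must satisfy the Futaki/Matsushima obstruction. On $\C^n$ with the standard holomorphic structure, the volume-minimization characterization of Martelli--Sparks--Yau shows that the Sasaki--Einstein Reeb field is the standard one. That Reeb field generates $\gamma$'s radial part, which forces the moduli $|\lambda_j|$ to coincide.

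As an alternative, we could use the Gauduchon--Ivanov argument in the spirit of \cite{gi}: the Reeb flow has diagonal weights equal to $\log|\lambda_j|$, and an Einstein--Weyl condition with unequal weights fails by a direct curvature computation on the weighted projective base.
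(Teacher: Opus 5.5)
Your sufficiency argument is correct, and cleaner than the paper's ``straightforward computation''. The Weyl connection of $dd^c|z|^2/|z|^2$ is the Levi-Civita connection of the flat metric $dd^c|z|^2$, so $\Ric^{W,J}=0$. Step 3 is also fine. On a K\"ahler cone $\Ric(\xi,\cdot)=0$, so evaluating $\rho_{\tilde\omega}=\mu\,\tilde\omega/r^2$ on $(r\partial_r,\xi)$ gives $\mu=0$; this replaces the paper's appeal to \cite[Th\'eor\`eme 3]{gau}.

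The genuine gap is step 4, which carries the whole content of the necessity direction and which you only grant. Flatness of the cone is more than you need: going from ``standard Reeb field'' to ``flat'' would also require uniqueness of transverse K\"ahler--Einstein metrics. What you need is that the Reeb weights coincide, and for that three ingredients are missing.

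(i) You need the Reeb field in the coordinates where $\gamma=d_\lambda$, namely $\xi=-2J\theta^\sharp$ with $\theta^\sharp=c\operatorname{Re}\sum_j\log|\lambda_j|\,z_j\partial_{z_j}$. This does not follow just from the Lee flow commuting with $\gamma$: for resonant $\lambda$ (e.g.\ $\lambda_1=\lambda_2^2$) the centralizer of $d_\lambda$ contains non-diagonal fields such as $z_2^2\partial_{z_1}$. The paper takes it from \cite[Prop.~5.2]{io25}.

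(ii) You need to normalize the weights. Ricci-flatness of the cone gives a parallel holomorphic volume form $\Omega=f\,dz_1\wedge\cdots\wedge dz_n$ with $\mathcal L_\xi\Omega=in\Omega$. Since $f$ extends over $0$ by Hartogs with $f(0)\neq0$, evaluating at the origin gives $\sum_j\mu_j=n$, where $\mathcal L_\xi z_j=i\mu_jz_j$ and $\mu_j=-2c\log|\lambda_j|>0$.

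(iii) You need an obstruction. The paper uses Lichnerowicz's bound on a Ricci-flat K\"ahler cone: a holomorphic function with $\mathcal L_\xi h=i\mu h$, $\mu>0$, has $\mu\geq1$. Applied to each $z_j$ and combined with $\sum_j\mu_j=n$, this forces $\mu_j=1$ for all $j$, i.e.\ $|\lambda_1|=\cdots=|\lambda_n|$, without ever identifying the cone as flat.

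Your Martelli--Sparks--Yau sketch could in principle replace (iii). The link volume for weights $a$ is proportional to $1/\prod_ja_j$, whose only critical point on $\{\sum_ja_j=n\}$ is $a=(1,\ldots,1)$. As written, though, it has a hole. Volume-minimization criticality is with respect to a torus acting by isometries of the Sasaki--Einstein metric, and a priori only the closure of the Reeb flow does so. If $\xi$ is quasi-regular, that closure is a circle, its Reeb cone meets the normalization hyperplane in a single point, and criticality says nothing. You would have to show that the transverse Futaki invariant vanishes on the whole diagonal $\mathfrak t^n$, or run a Matsushima-type argument making the metric $T^n$-invariant, and you would still need (i) and (ii). The Lichnerowicz bound avoids all of this, since it holds for an arbitrary Reeb field and any holomorphic eigenfunction. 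The ``Gauduchon--Ivanov'' alternative you mention is too vague to assess.
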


\begin{proof}
If the condition $|\lambda_1|=\cdots=|\lambda_n|$ is met, then a straightforward computation shows that the metric $\omega=\frac{dd^c\|z\|^2}{\|z\|^2}$ is Einstein--Weyl. 

Suppose now that $X$ admits an Einstein--Weyl lcK metric $\omega$. Then $\omega$ is Vaisman and since $\pi_1(X)=\mathbb{Z}$, the rank of its Lee form is 1. Therefore, by the structure theorem for compact Vaisman manifolds (\cite{ov03}), the universal cover with the corresponding K\" ahler metric $\tilde{\omega}$ is biholomorphic and isometric to a K\" ahler cone associated to a Sasaki manifold:
$$\bigl(\mathbb{C}^n \setminus \{0\}, \tilde{\omega}\bigr) \simeq \bigl(\mathbb C(S),g_C,J\bigr), \qquad g_C=dr^2+r^2g_S,$$
with Reeb vector field $\xi=Jr\frac{\partial}{\partial r}$. By \cite[Th\'eor\`eme 3]{gau}, the Einstein--Weyl condition gives that $g_C$ is K\" ahler Ricci flat, which is equivalent to $g_S$ being Sasaki-Einstein (see e. g. \cite{s}). Since $X$ is Vaisman, by \cite[Prop. 5.2]{io25}, the contraction
$\gamma$ is holomorphically conjugate to a diagonal contraction $d_{\lambda}$, and the Lee vector field $\theta^{\sharp}$ is given by 
\begin{equation*}
\theta^{\sharp}=c\operatorname{Re}\left(\sum_{j=1}^n\log|\lambda_j|\,z_j\frac{\partial}{\partial z_j}\right).
\end{equation*}
Moreover, by the structure theorem of Vaisman manifolds in \cite{ov03}, we obtain the Reeb vector field as $\xi=-\frac{2}{\|\theta\|^2}J\theta^{\sharp}$. Let us assume that $\omega$ has $\|\theta\|=1$, then
\begin{equation*}
    \xi=-2J\theta^{\sharp}=-2c\sum_{j=1}^n\log|\lambda_j|\left( x_j \frac{\partial}{\partial y_j}-y_j\frac{\partial}{\partial x_j} \right).
\end{equation*}
We want now to determine $c$. Since $g_C$ is Ricci flat, the Chern connection of the canonical bundle of $\mathbb{C}^n \setminus \{0\}$ is flat and since $\mathbb{C}^n \setminus \{0\}$ is simply connected, there exists a nowhere vanishing parallel section, hence a parallel holomorphic volume form $\Omega \in \Omega^{n, 0}(\mathbb{C}^n \setminus \{0\})$, unique up to scalar multiplication.
Let $\Omega=f(z)dz_1 \wedge \ldots \wedge dz_n$. The holomorphic function $f$ on $\mathbb{C}^n \setminus \{0\}$ extends to $0$ by Hartogs', since $n \geq 2$ and $f(0) \neq 0$.
We claim that $\mathcal L_{\xi}\Omega=in\Omega$. 
Indeed, using $\nabla r\frac{\partial}{\partial r}=\operatorname{Id}$, $\nabla J=0$,  $\nabla\Omega=0$ and $\Omega$ being of type $(n,0)$, we get
\begin{align}
(\mathcal{L}_{\xi}\Omega)(Y_1,\ldots,Y_n)&=\sum_{j=1}^n\Omega(Y_1,\ldots,JY_j,\ldots,Y_n)  =in\,\Omega(Y_1,\ldots,Y_n),\label{Lie}
\end{align}

By direct computation
$\mathcal{L}_{\xi}\Omega=\left[\xi(f)+2ic\left(\sum_{j=1}^n-\log|\lambda_j|\right)f\right]dz_1\wedge\cdots\wedge dz_n.$
Using now \eqref{Lie} and that the vector field $\xi$ extends with 0 in the origin and $f(0) \neq 0$ we get 
$$2c\sum_{j=1}^n-\log|\lambda_j|=n.$$

We shall use now Lichnerowicz' obstruction (\cite[Section 6]{s}) for the existence of Sasaki--Einstein metrics. It states that
that if $f$ is a nonconstant holomorphic function on a Ricci-flat Kähler cone and $\mathcal{L}_{\xi}f=i\mu f$, for some $\mu>0$, then
$\mu\geq1$. 

In our case, we apply this to each coordinate function $z_j$ and we get
$$\mathcal{L}_{\xi}z_j=i\frac{n\bigl(-\log|\lambda_j|\bigr)}
{\displaystyle\sum_{k=1}^n-\log|\lambda_k|}z_j.$$
Therefore, for every $j$, $\frac{n\bigl(-\log|\lambda_j|\bigr)}
{\sum_{k=1}^n-\log|\lambda_k|}\geq 1$, which happens if and only if $|\lambda_1|=\ldots=|\lambda_n|$.
\end{proof}

\section{Second Bismut--Ricci form of lck metrics}
 
This section is dedicate to the study of lcK {second Bismut--Einstein} metrics. We collect our results in the following statement.

\begin{theo}\label{thm: Bismut-flat}
    Let $(M, J, \omega)$ be an lck manifold and assume $\omega$ is Gauduchon satisfying the second Bismut--Einstein equation
    \begin{equation*}
        \Ric^{B, 2}=f \omega, \qquad \text{for some}\, f \in \mathcal{C}^{\infty}(M).
    \end{equation*}
    If $\mathrm{dim}_{\mathbb{C}}M\geq 4$, then $\omega$ is Vaisman and $f \equiv 0$.   If $\mathrm{dim}_{\mathbb{C}}M \in \{2, 3\}$ and $f \geq 0$, then $\omega$ is Vaisman and $f\equiv 0$. 
\end{theo}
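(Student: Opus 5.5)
The plan is to turn the Einstein equation into an equation for the Lee form alone, using \eqref{eq:second-bismut}, and then to extract from it an integral identity that forces the Vaisman relation $\|\theta\|^2\omega=\theta\wedge J\theta-dJ\theta$ with $\|\theta\|$ constant. At that point \cite[Proposition 3.2.2]{i18} gives that $\omega$ is Vaisman. Throughout I set
$$\psi:=dJ\theta-\theta\wedge J\theta+\|\theta\|^2\omega,$$
a real $(1,1)$-form, so that the goal is $\psi=0$ together with $d\|\theta\|^2=0$.

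\textbf{Step 1: the differential equation.} I rewrite \eqref{eq:second-bismut} as
$$fn^{-1}\cdot n\,\omega=\Ric^{Ch,1}+\tfrac{n-2}{2}dJ\theta+\tfrac n2\theta\wedge J\theta-\tfrac n2\|\theta\|^2\omega$$
and apply $d$. Since $\Ric^{Ch,1}$, $dJ\theta$ and $\theta$ are closed, and $d(\theta\wedge J\theta)=-\theta\wedge dJ\theta$, this gives
$$\theta\wedge\psi=\beta\wedge\omega,\qquad \beta:=-\tfrac2n(df+f\theta)-d\|\theta\|^2 .$$
Here I used $\theta\wedge\theta\wedge J\theta=0$, so that $\theta\wedge(dJ\theta+\|\theta\|^2\omega)=\theta\wedge\psi$.

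\textbf{Step 2: the trace equation.} Separately, I take the $\omega$-trace of the Einstein equation. Because $\omega$ is Gauduchon, $d^*\theta=0$, so $\tr_\omega dJ\theta=-(n-1)\|\theta\|^2$ and $\tr_\omega\psi=\tr_\omega dJ\theta-\|\theta\|^2+n\|\theta\|^2=0$. The trace therefore expresses $nf$ as the Chern scalar curvature plus an explicit multiple of $\|\theta\|^2$.

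\textbf{Step 3: decomposing Step 1.} Since $\psi$ is trace-free, it is primitive. I expect the equation $\theta\wedge\psi=\beta\wedge\omega$ to split into a primitive part and a Lefschetz part.
\begin{itemize}
\item In $\dim_{\mathbb C}M\ge 4$, the Lefschetz map on $3$-forms is injective and the primitive component of $\theta\wedge\psi$ is determined by $\psi$. This should give $\psi\lrcorner\theta$ proportional to $\beta$, and $\theta\wedge\psi_{\mathrm{prim}}=0$.
\item Wedging with $J\theta\wedge\omega^{n-3}$ and integrating should then give
$$\int_M\|\psi\|^2\|\theta\|^2\,\mathrm{vol} \;=\; c_n\int_M\Bigl(\langle d\|\theta\|^2,\theta\rangle+\tfrac2n\langle df,\theta\rangle+\tfrac2nf\|\theta\|^2\Bigr)\mathrm{vol}.$$
\item On the right-hand side, the derivative terms integrate to zero because $d^*\theta=0$.
\end{itemize}

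\textbf{Step 4: closing the argument.} The remaining task is to control $\int_M f\|\theta\|^2$.
\begin{itemize}
\item In $\dim\ge4$, I would use the extra relation $\theta\wedge\psi_{\mathrm{prim}}=0$ from Step 3. Away from zeros of $\theta$ it forces $\psi=\theta\wedge\eta+\ldots$, and I intend to combine it with Step 1 to get $\beta=0$. That is, $df+f\theta=-\tfrac n2 d\|\theta\|^2$, and integrating against $\theta$ with $d^*\theta=0$ should give $\int f\|\theta\|^2=0$ and $\|\theta\|$ constant.
\item In $\dim\in\{2,3\}$, this injectivity fails. There I would use the hypothesis $f\ge0$ so that the sign of the right-hand side in Step 3 forces both sides to vanish.
\end{itemize}
Once $\psi=0$ and $\|\theta\|$ is constant, $\omega$ is Vaisman. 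I then plug the Vaisman relation back into \eqref{eq:second-bismut}. For Vaisman metrics $\Ric^{Ch,1}$ is known explicitly in terms of $dJ\theta$ and the transverse Ricci form, and contracting with $\theta^\sharp$ (which lies in the kernel of $\Ric^{B,2}$, since $\theta$ is Bismut-parallel) should give $f\|\theta\|^2=0$, hence $f\equiv0$ unless $\omega$ is Kähler.

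\textbf{Main obstacle.} I expect the hardest part to be the algebraic Lefschetz decomposition of $\theta\wedge\psi$ in Step 3, including getting the constants $c_n$ right. I would first try to verify it pointwise in a unitary frame adapted to $\theta$, that is, with $e_1=\theta^\sharp/\|\theta\|$ and $e_2=Je_1$.
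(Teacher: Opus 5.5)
There is a genuine gap. Your argument only ever uses the first-order consequence $d\rho=0$ of the equation (Step 1), where $\rho=(f+\frac n2\|\theta\|^2)\omega-\frac{n-2}{2}dJ\theta-\frac n2\theta\wedge J\theta$. Step 2 gives no constraint: the Chern scalar curvature is uncontrolled, so the trace just defines $f$. This first-order system does not force Vaisman.

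For $n\geq3$, wedging $\theta\wedge\psi=\beta\wedge\omega$ with $\theta$ and using injectivity of $\omega\wedge\cdot$ on $2$-forms gives $\theta\wedge\beta=0$. The pointwise solutions are then exactly $\beta=b\theta$ and $\psi=b\bigl(\omega-n\|\theta\|^{-2}\theta\wedge J\theta\bigr)$ with $b$ undetermined, so $\beta=0$ in Step 4 cannot come out of the algebra. The branch $b=\|\theta\|^2$ is $dJ\theta=(1-n)\theta\wedge J\theta$ with $f=-\frac n2\|\theta\|^2$. It is realized by the Tricerri metric on Inoue surfaces, which is compact, Gauduchon and non-Vaisman with $n=2$. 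Locally, in every dimension, it is realized by $\frac{i\,dw\wedge d\bar w}{(\imm w)^2}+(\imm w)^{1/(n-1)}\sum_j i\,dz_j\wedge d\bar z_j$. This is Gauduchon lcK with $\|\theta\|$ constant, and $\rho$ is closed for that $f$. Since no sign is imposed on $f$ when $\dim\geq4$, you would need some extra global input to exclude this branch, and your outline provides none.

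The integral identity of Step 3 also cannot hold. Wedging the $3$-form $\theta\wedge\psi$ with $J\theta\wedge\omega^{n-3}$ gives a $(2n-2)$-form, not a top-degree form, and $\omega^{n-3}$ is meaningless for $n=2$. Moreover, under $\omega\mapsto\lambda\omega$ the forms $\theta$, $J\theta$, $\psi$ are unchanged, while $\|\theta\|^2\mapsto\lambda^{-1}\|\theta\|^2$, $\|\psi\|^2\mapsto\lambda^{-2}\|\psi\|^2$ and $f\mapsto\lambda^{-1}f$. Hence your left-hand side scales like $\lambda^{n-3}$ and the right-hand side like $\lambda^{n-2}$. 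So in dimensions $2$ and $3$ there is no identity to which the sign of $f$ could be applied.

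The missing ingredient is the second-order content of the Einstein equation, beyond the mere closedness of $\Ric^{Ch,1}$. The paper substitutes $\Ric^{Ch,1}$ from the equation into \eqref{eq: RicCh RicLC}. This gives $\Ric^g$ explicitly in terms of $\nabla\theta$, $f$, $\theta\otimes\theta$ and $J\theta\otimes J\theta$. It then pairs the contracted Bianchi identity $\delta\Ric^g+\frac12 ds_g=0$ with $\theta$; the Gauduchon condition kills the scalar-curvature term. Combining this with the identity of \cite[Proposition~5]{adl} yields $(n-1)\int_M\|\nabla\theta\|^2=\frac{3n-2}{n}\int_M\|(\nabla\theta)^{1,1}\|^2$, which forces $\nabla\theta=0$ for $n\geq4$.

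For $n\in\{2,3\}$ and $f\geq0$, the paper applies the Bochner formula to the harmonic form $\theta$ and obtains $\int_M f\|\theta\|^2=-\frac{n-2}{n}\int_M\|(\nabla\theta)^{1,1}\|^2-\int_M\|(\nabla\theta)^{(2,0)+(0,2)}\|^2$. Together with the previous identity when $n=2$, this gives $\nabla\theta=0$. This route proves the Vaisman condition directly, without passing through $\psi=0$. Your final step is fine and matches the paper: $\theta^\sharp$ and $J\theta^\sharp$ are Bismut-parallel, which gives $f\equiv0$.
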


\begin{proof} 
    We shall first replace the second Bismut--Ricci form with the first Chern--Ricci form.
    That is, using \eqref{eq:second-bismut} our Einstein equation is equivalent to
    \begin{equation}\label{eq:chern-from-bismut}
        \Ric^{\mathrm{Ch}, 1}=\left(f+\frac{n}{2}\|\theta\|^2\right)\omega-\frac{n-2}{2}dJ\theta-\frac n2\theta\wedge J\theta.
    \end{equation}
    Then we obtain an equation for the Riemannian Ricci curvature combining \eqref{eq: RicCh RicLC} with \eqref{eq:chern-from-bismut}:
    \begin{equation}\label{eq:LC-Ricci}
        \Ric^g=2(\nabla\theta)^{1,1}-(n-1)\nabla\theta+\left(f+\frac{2n-3}{2}\|\theta\|^2\right)g-\frac{2n-3}{2}\theta\otimes\theta-\frac{n-2}{2}J\theta\otimes J\theta,
    \end{equation}
    where we used \cite[Equation~(3)]{bo}:
    \begin{equation*}
        dJ\theta(\,\cdot\,,J\,\cdot\,)=2(\nabla\theta)^{1,1}-\|\theta\|^2g+\theta\otimes\theta+J\theta\otimes J\theta.
    \end{equation*}
    We now use the contracted Bianchi identity $\delta\Ric^g+\frac{1}{2}\,ds_g=0$. Pairing it  with $\theta$ and integrating, the Gauduchon condition gives
    \begin{equation}\label{eq:Bianchi-paired}
        \int_M\left\langle\Ric^g,\nabla\theta\right\rangle\,\mathrm{vol}_g=0.
    \end{equation}
    However, $\operatorname{tr}_g(\nabla\theta)=-d^*\theta=0$ and $\int_M(\nabla_{\theta^\sharp}\theta)(\theta^\sharp)\,\mathrm{vol}_g=
    \frac{1}{2}\int_M\theta^\sharp\!\left(\|\theta\|^2\right)\,\mathrm{vol}_g=\frac{1}{2}\int_M\|\theta\|^2d^*\theta\,\mathrm{vol}_g=0$. Hence, by substituting \eqref{eq:LC-Ricci} into
    \eqref{eq:Bianchi-paired} we obtain
    \begin{equation}\label{eq:Bianchi-integral}
        0=2\int_M\|(\nabla\theta)^{1,1}\|^2\,\mathrm{vol}_g-(n-1)\int_M\|\nabla\theta\|^2\,\mathrm{vol}_g-\frac{n-2}{2}\int_M (\nabla_{J\theta^\sharp}\theta)(J\theta^\sharp)\,\mathrm{vol}_g.
    \end{equation}
    We shall use now \cite[Proposition~5]{adl}, which is a consequence of $dJ\theta$ and $d^*d\omega$ being $L^2$-orthogonal.  
    In our setting, 
    it reads as
    \begin{equation*}
        0=\int_M\left[2\|(\nabla\theta)^{1,1}\|^2-n\,J\theta\bigl([\theta^\sharp,J\theta^\sharp]\bigr)\right]\,\mathrm{vol}_g.
    \end{equation*}
    Moreover, by $\left(\nabla_{\theta^{\sharp}}g\right)(J\theta^{\sharp}, J\theta^{\sharp})=0$, we get $J\theta\bigl([\theta^\sharp,J\theta^\sharp]\bigr)=\frac{1}{2}\theta^\sharp\left(\|\theta\|^2\right)-
    (\nabla_{J\theta^\sharp}\theta)(J\theta^\sharp)$ and since the integral of the first term vanishes by $d^*\theta=0$, this gives 
    \begin{equation}\label{eq:nablathetaJtheta}
        \int_M\|(\nabla\theta)^{1,1}\|^2\,\mathrm{vol}_g=-\frac{n}{2}\int_M(\nabla_{J\theta^\sharp}\theta)(J\theta^\sharp)\,\mathrm{vol}_g.
    \end{equation}
    Substituting the above equation into \eqref{eq:Bianchi-integral}, we finally obtain
    \begin{equation}\label{eq:main-integral}
    (n-1)\int_M\|\nabla\theta\|^2\,\mathrm{vol}_g=\frac{3n-2}{n}\int_M\|(\nabla\theta)^{1,1}\|^2\,\mathrm{vol}_g.
    \end{equation}
    Since $(\nabla\theta)^{1,1}$ is the $J$-invariant part of $\nabla\theta$, $\|(\nabla\theta)^{1,1}\|^2\leq\|\nabla\theta\|^2$. Assume first that $n\geq4$. Since $(n-1)\int_M\|\nabla\theta\|^2\,\mathrm{vol}_g\leq\frac{3n-2}{n}\int_M\|\nabla\theta\|^2\,\mathrm{vol}_g$ and 
    $n-1-\frac{3n-2}{n}>0$ for $n\geq4$, it follows that $\int_M\|\nabla\theta\|^2\,\mathrm{vol}_g=0$, hence $\nabla\theta=0$. 

    We now suppose that $f\geq0$, with $n\geq2$. The general Weitzenb\" ock formula for a one-form $\eta$ is 
    \begin{equation*}
        \Delta \eta = \nabla^*\nabla \eta + \iota_{\eta^\sharp}\Ric^g,
    \end{equation*}
    where $\Delta$ is the laplacian $dd^*+d^*d$ and $\nabla^*$ is the $L^2$-dual of $\nabla$. Since $\theta$ is harmonic, the Weitzenb\"ock formula reads as
    \begin{equation*}
        0=\int_M\|\nabla\theta\|^2\,\mathrm{vol}_g+\int_M\Ric^g(\theta^\sharp,\theta^\sharp)\,\mathrm{vol}_g.
    \end{equation*}
    Evaluating \eqref{eq:LC-Ricci} on $(\theta^\sharp,\theta^\sharp)$ and using \eqref{eq:nablathetaJtheta}, we obtain
    \begin{equation*}
        \int_M f\|\theta\|^2\,\mathrm{vol}_g=-\int_M\|\nabla\theta\|^2\,\mathrm{vol}_g+\frac{2}{n}
        \int_M\|(\nabla\theta)^{1,1}\|^2\,\mathrm{vol}_g.
    \end{equation*}
    This is equivalent to 
    \begin{equation}\label{eq:Weitzenbockf}
        \int_M f\|\theta\|^2\,\mathrm{vol}_g=-\frac{n-2}{n}\int_M\|(\nabla\theta)^{1,1}\|^2\,\mathrm{vol}_g-\int_M\|(\nabla\theta)^{(2,0)+(0,2)}\|^2
        \,\mathrm{vol}_g.
    \end{equation}
    If $n\geq3$, the left-hand side of \eqref{eq:Weitzenbockf} is non-negative and the right-hand side is non-positive. Therefore both sides vanish and $(\nabla\theta)^{1,1}=0$ and $(\nabla\theta)^{(2,0)+(0,2)}=0$, hence $\nabla\theta=0$.
    It thus remains to consider $n=2$. Equation \eqref{eq:Weitzenbockf} first gives $(\nabla\theta)^{(2,0)+(0,2)}=0$. Therefore, $\nabla\theta=(\nabla\theta)^{1,1}.$
    On the other hand, equation \eqref{eq:main-integral} for $n=2$ gives $\int_M\|\nabla\theta\|^2\,\mathrm{vol}_g=2\int_M\|(\nabla\theta)^{1,1}\|^2\,\mathrm{vol}_g$. Since $\|\nabla\theta\|^2=\|(\nabla \theta)^{1, 1}\|^2+\|(\nabla \theta)^{(2, 0)+(0, 2)}\|^2$, we get $\nabla \theta=0$. 
    
    Finally, the conclusion $\nabla\theta=0$ implies that the Einstein factor $f$ vanishes. Indeed, on a Vaisman manifold the Lee and anti-Lee vector fields are parallel for the Bismut connection \cite[Theorem 3.7]{av}, and hence $\Ric^{B,2}(\theta^\sharp,J\theta^\sharp)=0$.
    Since $\Ric^{B,2}=f\omega$, we obtain $f \equiv 0$. 
\end{proof}

\begin{rmk}
   Compact complex manifolds $(M, J)$ admitting lcK metrics that are Gauduchon and second-Bismut--Einstein flat, also admit {\em second-Gauduchon--Einstein} metrics for any $t\in\R$. Namely, on these manifolds, for any $t\in\R$, one can find lcK metrics $\omega_t$ that are solutions to the equation $\Ric^{t, 2}_{\omega_t}=\lambda_t \omega_t$ for some $\lambda_t \in \mathcal{C}^{\infty}(M)$.
\end{rmk}
\begin{proof}
    By Theorem \ref{thm: Bismut-flat} we get the lcK metric $\omega$ satisfying $\Ric^{B,2}=0$ is Vaisman. By performing the following change, called a $Q$-homothetic deformation (see \cite{slesar-vilcu}):
    \begin{equation*}
    \omega_a=a\omega+(a^2-a) \theta \wedge J\theta, 
    \end{equation*}
    we obtain a new Vaisman metric with Lee form $a\theta$. For $a=\frac{2(n-1)}{n}$, this is an Einstein--Weyl metric. 
    In general, if $\tilde{\omega}$ is an Einstein--Weyl metric with Lee form $\tilde{\theta}$, let us define
    \begin{equation*}
        \omega_{t}=a_t\tilde{\omega}+(a_t^2-a_t)\tilde{\theta} \wedge J\tilde{\theta}.
    \end{equation*}
    Then for $a_t=\frac{4n}{4(n-1+t)+n(t-1)^2}$, we have $\Ric^{t, 2}_{\omega_{t}}=\frac{(n-1)(t+1)^2}{8}\omega_t$, thus satisfying the second-Gauduchon--Einstein equation.
\end{proof}

\section{Second Gauduchon--Ricci forms of lcK metrics}
 
In this section we collect evidences that among the Gauduchon connections the Bismut one is the best suited for lcK and exsplecially Vaisman geometry.
First of all, it is known from \cite{av} that the very defining property of Vaisman metrics that $\nabla\theta=0$ for lcK metrics is equivalent to ask that $\nabla^B\theta=0$, while we show here the same condition stated for any other Gauduchon connection implies the K\"ahlerness of the metric. 
\begin{prop}
    Let $(M,J,\omega)$ be a lcK manifold. If $(\nabla^t\theta)^{J_+}=0$ for $t\neq-1$ then $\omega$ is K\"ahler. Moreover, $\nabla^B\theta = \nabla^g\theta$, hence $(\nabla^B\theta)^{J_+}=0$ implies Vaisman.
\end{prop}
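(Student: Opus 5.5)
The plan is to compute $\nabla^t\theta$ explicitly in terms of $\nabla^g\theta$, using the lcK expression of $\nabla^t$ given after \eqref{eq: nabla t}, and then trace. I assume $M$ compact, as in the rest of the paper, since the argument integrates over $M$. Write $\nabla^t_XY=\nabla^g_XY+A_XY$ with
$$A_XY=-\tfrac{t}{2}\theta(JX)JY+\tfrac{t-1}{4}\theta(JY)JX+\tfrac{t-1}{4}\omega(X,Y)J\theta^\sharp-\tfrac{t+1}{4}\theta(Y)X+\tfrac{t+1}{4}g(X,Y)\theta^\sharp .$$
Then $(\nabla^t_X\theta)(Y)=(\nabla^g_X\theta)(Y)-\theta(A_XY)$. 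Three facts simplify $\theta(A_XY)$: $\theta(J\theta^\sharp)=g(\theta^\sharp,J\theta^\sharp)=0$, and the two $\theta(JX)\theta(JY)$ coefficients combine as $-\tfrac t2+\tfrac{t-1}{4}=-\tfrac{t+1}{4}$. This should give
$$\nabla^t\theta=\nabla^g\theta+\tfrac{t+1}{4}\bigl(\theta\otimes\theta+J\theta\otimes J\theta-\|\theta\|^2g\bigr).$$
Both claims will follow from this formula. For $t=-1$ the correction vanishes, so $\nabla^B\theta=\nabla^g\theta$.

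For $t\neq-1$, note that the correction term $S=\theta\otimes\theta+J\theta\otimes J\theta-\|\theta\|^2g$ is $J$-invariant. Hence
$$(\nabla^t\theta)^{J_+}=(\nabla^g\theta)^{J_+}+\tfrac{t+1}{4}S .$$
Since $g$ is $J$-invariant, the $g$-trace of $(\nabla^g\theta)^{J_+}$ equals $\operatorname{tr}_g\nabla^g\theta=-d^*\theta$. Also $\operatorname{tr}_gS=\|\theta\|^2+\|\theta\|^2-2n\|\theta\|^2=-2(n-1)\|\theta\|^2$. Taking the trace of $(\nabla^t\theta)^{J_+}=0$ therefore gives
$$d^*\theta=-\tfrac{(t+1)(n-1)}{2}\|\theta\|^2 .$$
Integrating over compact $M$ kills the left-hand side. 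Since $t\neq-1$ and $n\ge2$, this forces $\theta\equiv0$, so $\omega$ is K\"ahler.

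For the Bismut statement, assume $(\nabla^g\theta)^{J_+}=0$, i.e.\ $(\nabla\theta)^{1,1}=0$. I will use \cite[Equation~(3)]{bo},
$$dJ\theta(\cdot,J\cdot)=2(\nabla\theta)^{1,1}-\|\theta\|^2g+\theta\otimes\theta+J\theta\otimes J\theta,$$
which turns the assumption into $dJ\theta=\theta\wedge J\theta-\|\theta\|^2\omega$, up to checking sign conventions. This is exactly the Vaisman-type identity recalled in Section~\ref{sec: prelim}. By \cite[Proposition 3.2.2]{i18}, it then suffices to show that $\|\theta\|$ is constant.

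To get constancy, I differentiate the identity. Using $d\theta=0$ and $d\omega=\theta\wedge\omega$:
\begin{itemize}
\item the left-hand side gives $d(dJ\theta)=0$;
\item $d(\theta\wedge J\theta)=-\theta\wedge dJ\theta=\|\theta\|^2\theta\wedge\omega$;
\item $d(\|\theta\|^2\omega)=d\|\theta\|^2\wedge\omega+\|\theta\|^2\theta\wedge\omega$.
\end{itemize}
The $\|\theta\|^2\theta\wedge\omega$ terms cancel, leaving $d\|\theta\|^2\wedge\omega=0$. The Lefschetz map on $1$-forms is injective for $n\ge2$, so $\|\theta\|$ is constant and the metric is Vaisman.

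The main obstacle is bookkeeping of conventions: the sign convention for $J\theta$, the normalization of $\operatorname{tr}_g$ versus $\operatorname{tr}_\omega$, and the identification $(\nabla\theta)^{1,1}=(\nabla\theta)^{J_+}$. A sign error would change the trace coefficient or the form of the Vaisman identity, so I would double-check the computation of $\theta(A_XY)$ and the application of \cite[Equation~(3)]{bo}.
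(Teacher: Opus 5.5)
Your proof is correct. For the first claim it follows the paper's argument: the paper also shows $\tr_g\nabla^t\theta=-d^*\theta-\frac{(t+1)(n-1)}{2}\|\theta\|^2$ and integrates, so it too tacitly assumes compactness, as you note. The paper, however, only computes the trace of the difference tensor, through $d\omega(J\cdot,\theta^\sharp,\cdot)$. It then obtains $\nabla^B\theta=\nabla^g\theta$ separately, from $\iota_{\theta^\sharp}d^c\omega=0$.

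Your closed formula $\nabla^t\theta=\nabla^g\theta+\frac{t+1}{4}(\theta\otimes\theta+J\theta\otimes J\theta-\|\theta\|^2g)$ gives more:
\begin{itemize}
\item It yields both facts at once.
\item It shows that the $J$-anti-invariant part of $\nabla^t\theta$ does not depend on $t$.
\item It shows that the expression printed in the paper for $d\omega(J\cdot,\theta^\sharp,\cdot)$ contains a misprint. That tensor must be a multiple of $\|\theta\|^2g-\theta\otimes\theta-J\theta\otimes J\theta$, and the trace the paper finally uses agrees with yours.
\end{itemize}

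For the implication ``$(\nabla^B\theta)^{J_+}=0\Rightarrow$ Vaisman'' the two routes genuinely differ. The paper simply invokes \cite[Theorem 1]{mor17}. You derive it from two facts the paper already uses elsewhere, \cite[Equation~(3)]{bo} and \cite[Proposition 3.2.2]{i18}. Your sign conventions are consistent: tracing \cite[Equation~(3)]{bo} gives exactly the identity $\tr_\omega dJ\theta=-d^*\theta-(n-1)\|\theta\|^2$ used in the paper, and the resulting relation is the Vaisman identity recalled in Section 2.

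Your differentiation step adds something. It shows that $dJ\theta=\theta\wedge J\theta-\|\theta\|^2\omega$ by itself forces $d\|\theta\|^2\wedge\omega=0$, hence $\|\theta\|$ is constant when $n\ge 2$. So the constant-norm hypothesis needed to apply \cite{i18} comes for free.

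The paper's route is shorter. Yours stays within the paper's own toolkit and makes the mechanism explicit.
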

\begin{proof}
    We want to compute the trace $\tr_g(\nabla^t\theta)$, which has to vanish by hypothesis.
    First of all, recall that $\tr_g\nabla^g\theta = -d^*\theta$.
    Then, since the difference between the Levi--Civita and $t$-Gauduchon connections is given by \eqref{eq: nabla t},
    we have to compute the trace of $d\omega(J\cdot,\theta^\sharp,\cdot)$. This tensor, for an lcK metric, is given by
    $$d\omega(J\cdot,\theta^\sharp,\cdot) = J\theta\otimes\theta - \theta\otimes\theta + \|\theta\|^2g.$$
    Putting these together we get 
    \begin{align*}
        0= \tr_g\nabla^t\theta &= \sum_{i=1}^n g(\nabla^t_{e_i}\theta^\sharp,e_i) + g(\nabla^t_{Je_i}\theta^\sharp,Je_i) \\
        &= -d^*\theta + \sum_{i=1}^n g((\nabla^t_{e_i}-\nabla^g_{e_i})\theta^\sharp,e_i) + g((\nabla^t_{Je_i}-\nabla^g_{Je_i})\theta^\sharp,Je_i) \\
        &= -d^*\theta -\frac{t+1}{2} \sum_{i=1}^n d\omega(Je_i,\theta^\sharp,e_i)\\
        &= -d^*\theta -\frac{t+1}{2}(n-1)\|\theta\|^2.
    \end{align*}
    Integrating it we get that if $t\neq-1$ the norm $\|\theta\|$ has to vanish.
    Finally, we notice that for an lcK metric $d^c\omega=-J\theta\wedge\omega$, hence $\iota_{\theta^\sharp}(d^c\omega)=0$ and $\nabla^B\theta=\nabla^g\theta$. So the last statement follows by \cite[Theorem 1]{mor17}.
\end{proof}

Now, we turn to our main topic, which is studying the properties of the second Ricci tensor of the Gauduchon connections for lcK metrics.
For Vaisman metrics, the closeness of the second Ricci tensor rules out all the Gauduchon connections which are not the Bismut.

\begin{theo}\label{th: dRic vais}
    Let $(M,J,\omega)$ be a Vaisman manifold. Then $dRic^{t,2}=0$ if and only if $t=-1$.
\end{theo}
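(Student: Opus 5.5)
We will differentiate formula \eqref{eq: Ric t,2} and use the Vaisman identities to isolate a single non-closed term, whose coefficient vanishes exactly at $t=-1$. The relevant facts about Vaisman metrics are the following. $\Ric^{Ch,1}$ and $dJ\theta$ are closed. The norm $\|\theta\|$ is constant, say $c>0$ in the non-K\"ahler case. We have $d^*\theta=0$, since $\theta$ is parallel. Finally,
\[
dJ\theta=\theta\wedge J\theta-c^2\omega .
\]
Substituting these into \eqref{eq: Ric t,2}, the terms $\Ric^{Ch,1}$ and $\frac{n+t-1}{2}dJ\theta$ drop out after applying $d$. We are left with
\[
d\Ric^{t,2}= n\frac{(1-t)^2}{8}\, d(\theta\wedge J\theta) + \Big((n-1)\frac{t}{2}-\frac{(1-t)^2}{8}\Big)c^2\, d\omega .
\]

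Next we compute the two differentials. Since $d\theta=0$, we get $d(\theta\wedge J\theta)=-\theta\wedge dJ\theta$. Using the Vaisman identity and $\theta\wedge\theta=0$, this becomes $-\theta\wedge(\theta\wedge J\theta - c^2\omega)=c^2\theta\wedge\omega$. Also $d\omega=\theta\wedge\omega$ by the lcK condition. Therefore
\[
d\Ric^{t,2}= c^2\Big( n\frac{(1-t)^2}{8} + (n-1)\frac{t}{2} - \frac{(1-t)^2}{8}\Big)\theta\wedge\omega = \frac{(n-1)c^2}{8}\big((1-t)^2+4t\big)\theta\wedge\omega = \frac{(n-1)c^2}{8}(1+t)^2\,\theta\wedge\omega .
\]

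To conclude, observe that $\theta\wedge\omega\neq 0$ wherever $\theta\neq0$, since $n\geq 2$ and the Lefschetz map on $1$-forms is injective. So for a non-K\"ahler Vaisman metric, i.e. $c\neq 0$, closedness of $\Ric^{t,2}$ holds if and only if $(1+t)^2=0$, that is $t=-1$. For $t=-1$ the computation directly gives $d\Ric^{B,2}=0$. The statement implicitly assumes the metric is strictly Vaisman, i.e. non-K\"ahler: in the K\"ahler case $\theta=0$ and every $\Ric^{t,2}$ coincides with the closed Ricci form. We should remark this in the proof.

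The main obstacle is bookkeeping of signs and conventions. We need to make sure the sign convention in the Vaisman identity $\|\theta\|^2\omega=\theta\wedge J\theta-dJ\theta$ matches the one used in \eqref{eq: Ric t,2}, and we need to justify that $\|\theta\|$ is constant and $d^*\theta=0$ on Vaisman manifolds. Both follow from $\nabla\theta=0$. A sanity check that the coefficient comes out as a perfect square $(1+t)^2$ also confirms the conventions.
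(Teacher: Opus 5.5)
Your proposal is correct and follows essentially the paper's argument: use $d^*\theta=0$ and the Vaisman identity $\|\theta\|^2\omega=\theta\wedge J\theta-dJ\theta$ in \eqref{eq: Ric t,2}, differentiate, and obtain $d\Ric^{t,2}$ as $(n-1)\frac{(1+t)^2}{8}$ times a nowhere-vanishing multiple of $\theta\wedge\omega$. The only difference is that the paper substitutes the identity before differentiating, so constancy of $\|\theta\|$ is not needed at that step. Your remark that the statement implicitly assumes the metric is non-K\"ahler is a fair clarification; the paper uses this tacitly when it invokes ``the Vaisman assumption''.
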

\begin{proof}
    We know that $\|\theta\|^2\omega=-dJ\theta+\theta\wedge J\theta$ since the metric is Vaisman.
    If we replace this in \eqref{eq: Ric t,2} we get
    $$\Ric^{t,2} = \Ric^{Ch,1} +\left(\frac{n+2t-nt-1}{2} + \frac{(1-t)^2}{8}\right) dJ\theta + \left((n-1)\frac{(1+t)^2}{8}\right)\theta\wedge J\theta .$$
    Thus differentiating we have
    $$d\Ric^{t,2}= (n-1)\frac{(1+t)^2}{8}\theta\wedge dJ\theta .$$
    Therefore, one implication follows by substituting $t=-1$ in this equation. On the other hand, if $d\Ric^{t,2}=0$ for $t\neq-1$ it has to hold $0=\theta\wedge dJ\theta =-\|\theta\|^2\theta\wedge\omega$ which is in contraddiction with the Vaisman assumption.
\end{proof}

More generally, for Gauduchon lcK metrics, we obtain the following result, highlighting again the Bismut connection as adapted to this geometry.

\begin{theo}\label{th: main dRic}
    Let $(M^n,J,\omega)$ be an lcK manifold of dimension $n\geq3$, and suppose that $\omega$ is Gauduchon and $dRic^{t,2}=0$.
    Then, if $t\neq-1,2n-1\pm2\sqrt{n(n-1)}$ the metric is K\"ahler.
\end{theo}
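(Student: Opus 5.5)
Differentiate \eqref{eq: Ric t,2}. Since $\Ric^{Ch,1}$ and $dJ\theta$ are closed, and $d(\theta\wedge J\theta)=-\theta\wedge dJ\theta$, $d\omega=\theta\wedge\omega$, the Gauduchon assumption $d^*\theta=0$ turns the condition $d\Ric^{t,2}=0$ into
\begin{equation*}
-n\frac{(1-t)^2}{8}\,\theta\wedge dJ\theta + d\big(c_t\|\theta\|^2\big)\wedge\omega + c_t\|\theta\|^2\,\theta\wedge\omega=0,\qquad c_t=(n-1)\frac{t}{2}-\frac{(1-t)^2}{8}.
\end{equation*}
I will extract scalar information by contracting this $3$-form. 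One could apply $\Lambda=\iota_\omega$, but it is more robust to take the interior product with $\theta^\sharp$, then wedge with $\omega^{n-2}$ and integrate against $\mathrm{vol}_g$. The alternative is to first pass to the primitive decomposition: $\omega\wedge\cdot$ is injective on $1$-forms, and $\theta\wedge dJ\theta$ splits as $\theta\wedge(dJ\theta)_0+\frac{1}{n}\tr_\omega(dJ\theta)\,\theta\wedge\omega$. Here $\tr_\omega dJ\theta=-(n-1)\|\theta\|^2$ by the Gauduchon condition.

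The key step is to show that for $n\ge3$ the $3$-form $\theta\wedge(dJ\theta)_0$ decomposes into an $\omega$-multiple plus a primitive part, and the primitive part must vanish separately. Wedging the whole equation with $\omega^{n-3}$ and using the Lefschetz decomposition should then give two equations.
\begin{enumerate}
\item The primitive part: $(1-t)^2\,(\theta\wedge(dJ\theta)_0)_{\mathrm{prim}}=0$.
\item The $\omega$-part: a $1$-form identity of the shape
\begin{equation*}
d\big(c_t\|\theta\|^2\big)+\Big(c_t+n\frac{(1-t)^2}{8}\frac{n-1}{n}\Big)\|\theta\|^2\theta+\alpha\,\iota_{\theta^\sharp}(dJ\theta)_0=0,
\end{equation*}
where $\alpha$ is a constant produced by the contraction $\Lambda(\theta\wedge\beta_0)=\pm\iota_{\theta^\sharp}J\beta_0$.
\end{enumerate}

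Next, pair this $1$-form identity with $\theta$ and integrate. The exact term gives $\int c_t\|\theta\|^2 d^*\theta=0$. The term $\iota_{\theta^\sharp}(dJ\theta)_0(\theta^\sharp)$ vanishes identically by skew-symmetry. What remains is $K(t,n)\int_M\|\theta\|^4\,\mathrm{vol}_g=0$ for an explicit quadratic polynomial $K$ in $t$. I expect $K(t,n)$ to be proportional to $(t+1)^2$ times a factor, or to a quadratic whose roots are exactly $2n-1\pm2\sqrt{n(n-1)}$. These are the roots of $t^2-2(2n-1)t+1=0$, which is the shape one gets by combining $c_t$ with $\frac{(n-1)(1-t)^2}{8}$. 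Checking this against the Vaisman case gives $\theta\wedge dJ\theta$ with coefficient $(n-1)(1+t)^2/8$ from Theorem \ref{th: dRic vais}. This suggests the $(1+t)$ root comes from a second relation, obtained by pairing with $J\theta$ instead, or by using $\theta\wedge J\theta$ and the primitive equation (1) when $t\neq1$.

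So I will produce two integral identities: one from pairing with $\theta$, and one from pairing with $J\theta$ (using $\langle d\|\theta\|^2,J\theta\rangle$ and $d^*J\theta$ computed from $dJ\theta$). Off the exceptional parameters, their combination forces $\int\|\theta\|^4=0$, hence $\theta=0$ and the metric is Kähler. The case $t=1$ is covered separately by Proposition \ref{prop: dRic Ch}.

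The main obstacle is the precise bookkeeping of the Lefschetz contraction of $\theta\wedge(dJ\theta)_0$, which requires $n\ge3$ for injectivity on $2$-forms, and confirming that the resulting quadratic in $t$ has exactly the stated roots. If the primitive part does not decouple cleanly, I would instead take the full norm-squared pairing of the $3$-form equation with $\theta\wedge\omega$ and integrate by parts.
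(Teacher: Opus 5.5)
There is a genuine gap at the step where you pair the $\omega$-part with $\theta$. For a $2$-form $\beta$, the Lefschetz contraction is $\Lambda(\theta\wedge\beta)=(\Lambda\beta)\,\theta+\iota_{J\theta^\sharp}\beta$ (with $\omega=g(J\cdot,\cdot)$). So the $J$ you wrote in $\iota_{\theta^\sharp}J\beta_0$ cannot be dropped, as it silently is in your displayed identity. The term you discard ``by skew-symmetry'' is really $(dJ\theta)_0(J\theta^\sharp,\theta^\sharp)$, and this is not zero in general: on a Vaisman manifold $dJ\theta(\theta^\sharp,J\theta^\sharp)=0$, so it equals $-\frac{n-1}{n}\|\theta\|^4$.

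Done correctly, with $c_1:=n\frac{(1-t)^2}{8}$, $\Lambda dJ\theta=-(n-1)\|\theta\|^2$ and $c_1+c_t=\frac{(n-1)(1+t)^2}{8}$, the $\theta$-pairing gives
\begin{equation*}
\frac{(n-1)^2(1+t)^2}{8}\int_M\|\theta\|^4\,\mathrm{vol}_g=-c_1\int_M dJ\theta(\theta^\sharp,J\theta^\sharp)\,\mathrm{vol}_g ,
\end{equation*}
whose right-hand side is uncontrolled. Worse, feed in your primitive equation (1): for $t\neq1$, $n\geq3$ it forces $dJ\theta=a\,\omega+b\,\theta\wedge J\theta$ on $\{\theta\neq0\}$, with $na+b\|\theta\|^2=-(n-1)\|\theta\|^2$. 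The pointwise version of this identity then reduces to $c_1a=c_t\bigl(\theta^\sharp(H)H^{-1}+H\bigr)$, where $H=\|\theta\|^2$. It merely determines $a$ and puts no constraint on $H$.

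The pairing with $J\theta$ gives only $c_t\,J\theta^\sharp(H)=0$. Its integral vanishes automatically, since $d^*J\theta=0$ for lcK metrics. So no combination of these first-order pairings can force $\int_M\|\theta\|^4=0$. The exceptional values $2n-1\pm2\sqrt{n(n-1)}$ are exactly the zeros of $c_t$ itself. Your coefficient $c_t+\frac{(n-1)(1-t)^2}{8}=\frac{(n-2)t^2+2nt+n-2}{8}$ has different roots. The exceptional values enter through a pointwise division by $c_t$, not through an integral coefficient.

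The missing idea is a second differentiation combined with a global argument showing that $\|\theta\|$ is constant. In the paper, wedging the $3$-form equation with $\theta$ gives $c_t\,dH\wedge\theta\wedge\omega=0$, hence $dH\wedge\theta=0$ when $c_t\neq0$ and $n\geq3$. Then $c_1dJ\theta=c_t\sigma\,\omega+f\,\theta\wedge J\theta$ with $\sigma=\theta^\sharp(H)H^{-1}+H$, which is the same relation your decomposition produces. Differentiating this (closedness of $dJ\theta$) and tracing yields a second-order ODE for $H$ along $\theta^\sharp$. This is rewritten as $d\varphi=P(\varphi)\theta$ for $\varphi=\theta^\sharp(H)H^{-2}$ and a quadratic $P$. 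Stokes applied to $\Psi(\varphi)\star\theta$, with $\Psi'(y)=P(y)e^{-y^2}$, gives $\int_M P(\varphi)^2e^{-\varphi^2}\|\theta\|^2\,\mathrm{vol}_g=0$, so $H$ is constant (Proposition \ref{prop: constant norm}).

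Only then does the algebra close (Proposition \ref{prop: 2Ricci closed}). With $H$ constant, tracing and differentiating $c_1dJ\theta=h\,\theta\wedge J\theta+c_tH\,\omega$ gives $(c_1-h)\,\theta\wedge dJ\theta=0$. Here $c_1=h$ exactly when $c_1+c_t=0$, i.e. $t=-1$; otherwise $c_t=0$. Your primitive-part reduction is a fine substitute for the first algebraic step. Without the second differentiation and the ODE/Stokes argument for the constancy of $\|\theta\|$, however, the proof does not go through.
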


This result is a consequence of the following two Propositions \ref{prop: 2Ricci closed} and \ref{prop: constant norm}, where we first assume that the norm of the Lee vector field is constant, and then we prove that this condition is achieved in dimension greater than $3$.

\begin{prop}\label{prop: 2Ricci closed}
    Let $(M,J,\omega)$ be a non-K\"ahler lcK manifold. Suppose that $\omega$ is Gauduchon, $\|\theta\|$ is constant and $dRic^{t,2}=0$, then there only are two possibilities 
    \begin{itemize}
        \item $t=-1$ and $\omega$ is Vaisman;
        \item $t=2n-1\pm2\sqrt{n(n-1)}$ and $dJ\theta = (1-n)\theta\wedge J\theta$. 
    \end{itemize}
    In both cases $\Ric^{t,2}=\Ric^{t,1}$, and so in particular it is cohomologous to $\Ric^{Ch,1}$ with difference given by a multiple of $dJ\theta$.
\end{prop}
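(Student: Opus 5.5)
Starting from \eqref{eq: Ric t,2}, I use that $\omega$ is Gauduchon, so $d^*\theta=0$, and that $\|\theta\|^2=:c>0$ is constant. The bracket multiplying $\omega$ is then the constant $\kappa_t:=(n-1)\frac{t}{2}-\frac{(1-t)^2}{8}$ times $c$. Since $d\Ric^{Ch,1}=0$, $d(\theta\wedge J\theta)=-\theta\wedge dJ\theta$ and $d\omega=\theta\wedge\omega$, the hypothesis $d\Ric^{t,2}=0$ becomes
\[
\theta\wedge\Bigl(-n\tfrac{(1-t)^2}{8}\,dJ\theta+\kappa_t c\,\omega\Bigr)=0 .
\]
The plan is to extract from this a pointwise algebraic relation between $dJ\theta$, $\omega$ and $\theta\wedge J\theta$.

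Write $\alpha:=-n\frac{(1-t)^2}{8}dJ\theta+\kappa_t c\,\omega$, a $(1,1)$-form. The identity $\theta\wedge\alpha=0$ forces $\alpha=\theta\wedge\beta$ for some $1$-form $\beta$. Because $\alpha$ is $J$-invariant, $\beta$ must be proportional to $J\theta$ modulo $\theta$, so $\alpha=\mu\,\theta\wedge J\theta$ for a function $\mu$. Taking $\tr_\omega$ and using $\tr_\omega dJ\theta=-(n-1)c$ (Gauduchon) and $\tr_\omega(\theta\wedge J\theta)=c$ (up to the paper's sign convention) determines $\mu$ as an explicit constant.

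Next I split into two cases. If $t=1$, then $\alpha=\kappa_1c\,\omega$ with $\kappa_1=\frac{n-1}{2}\neq0$, and $\omega$ cannot be of the form $\theta\wedge J\theta$ of rank 2. Hence $c=0$ and the metric is Kähler, consistent with Proposition~\ref{prop: dRic Ch}. If $t\neq1$, I get
\[
dJ\theta=a\,\omega+b\,\theta\wedge J\theta
\]
with constants $a,b$ depending on $t,n,c$.

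Now the key step, which I expect to be the main obstacle, is to derive extra constraints on $a,b$ from $d(dJ\theta)=0$. This gives
\[
a\,\theta\wedge\omega-b\,\theta\wedge dJ\theta=0,
\qquad\text{that is}\qquad
\theta\wedge\bigl(a\omega-ab\,\omega\bigr)=a(1-b)\,\theta\wedge\omega=0 .
\]
So either $a=0$ or $b=1$.

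In the case $b=1$, comparing with the trace relation $na+b\,c=-(n-1)c$ (with appropriate signs) gives $a=-c$. This is the Vaisman identity $c\,\omega=\theta\wedge J\theta-dJ\theta$ with constant norm, so $\omega$ is Vaisman by \cite[Proposition 3.2.2]{i18}. Substituting back into the definition of $\mu$ and $\alpha$ should give a quadratic equation in $t$ whose only admissible root is $t=-1$, matching Theorem~\ref{th: dRic vais}.

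In the case $a=0$, the trace gives $dJ\theta=(1-n)\theta\wedge J\theta$. Plugging this into $\alpha=\mu\theta\wedge J\theta$ with $\alpha$ having no $\omega$-component forces $\kappa_t=0$, that is $t^2-(4n-2)t+1=0$, whose roots are $t=2n-1\pm2\sqrt{n(n-1)}$. The delicate points are the sign conventions in the traces and making sure the two cases do not overlap for admissible $t$.

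Finally, to compare $\Ric^{t,2}$ with $\Ric^{t,1}$, I substitute the relation found in each case into \eqref{eq: Ric t,2} and into \eqref{eq: Ric t1 Ch1}. In the first case, $\theta\wedge J\theta$ and $\omega$ are expressed through $dJ\theta$; in the second, $\kappa_t=0$ kills the $\omega$-term and $\theta\wedge J\theta$ is expressed through $dJ\theta$. Checking that the resulting coefficient of $dJ\theta$ equals $\frac{1-t}{2}(n-1)$ is a direct verification, using the quadratic relation satisfied by $t$. This also shows that $\Ric^{t,2}$ differs from $\Ric^{Ch,1}$ by a multiple of $dJ\theta$.
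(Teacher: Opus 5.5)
Your proposal is correct and is essentially the paper's argument. The paper likewise writes $c_1(t)\,dJ\theta = h\,\theta\wedge J\theta + c_2(t)\|\theta\|^2\omega$, fixes the constant $h$ by tracing, and differentiates again to get $(c_1-h)\,\theta\wedge dJ\theta=0$, which is your dichotomy $a(1-b)=0$ in other coordinates: $b=1$ corresponds to $c_1=h$, i.e.\ $(n-1)(1+t)^2=0$, and $a=0$ corresponds to $c_2(t)=\kappa_t=0$. The only minor difference is that you rule out $t=1$ directly from the constant-norm hypothesis rather than citing Proposition~\ref{prop: dRic Ch}, which spares you that proposition's compactness assumption.
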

\begin{proof}
    For $t=1$ the result follows from Proposition \ref{prop: dRic Ch}.
    So we can suppose $t\neq1$.
    Differentiating \eqref{eq: Ric t,2} we get
    \begin{equation*}\label{eq: dRic t,2}
        0= -c_1(t)\theta\wedge dJ\theta + c_2(t)\|\theta\|^2 \theta\wedge\omega = \theta\wedge\left(-c_1(t)dJ\theta + c_2(t)\|\theta\|^2\omega\right)
    \end{equation*}
    where $c_1(t) = n\frac{(1-t)^2}{8}$ and $c_2(t) = n\frac{t}{2} - \frac{(1+t)^2}{8}$.
    This implies that there exists a function $h$ such that
    \begin{equation}\label{eq: a}
        c_1(t)dJ\theta = h\theta\wedge J\theta + c_2(t)\|\theta\|^2\omega
    \end{equation}
    By tracing \eqref{eq: a} we get
    $$-c_1(t)(n-1)\|\theta\|^2 = h\|\theta\|^2 + nc_2(t)\|\theta\|^2,$$
    while differentiating it
    $$0=-h\theta\wedge dJ\theta + c_2(t)\|\theta\|^2\theta\wedge\omega = -h\theta\wedge dJ\theta + c_1(t)\theta\wedge dJ\theta = (c_1(t)-h)\theta\wedge dJ\theta .$$
    Now, $c_1(t)=h$ if and only if $t=-1$. In this case, \eqref{eq: a} becomes
    $$dJ\theta = \theta\wedge J\theta -\|\theta\|^2\omega ,$$
    which together with the fact that $\|\theta\|$ is constant implies $\omega$ Vaisman.
    Therefore, in this case, \eqref{eq:second-bismut} and \eqref{eq: Ric t1 Ch1} give
    $$\Ric^{B,2} = \Ric^{\mathrm{Ch}, 1}+(n-1)dJ\theta = \Ric^{B,1}.$$
    On the other hand, if $t\neq -1$, it has to hold $dJ\theta = (1-n)\theta\wedge J\theta$ and $c_2(t)=0$.
    This condition implies that $Ric^{t,2}$ is cohomologous to $Ric^{Ch,1}$, with
    \begin{equation}\label{eq: Ric for c_2}
        \Ric^{t,2} = \Ric^{Ch,1} + \frac{(n-1)(1-t)}{2}dJ\theta = \Ric^{t,1}.
    \end{equation}
\end{proof}

Notice that the values $t=-1,2n-1\pm2\sqrt{n(n-1)}$ are precisely the values for which a non-K\"ahler lcK metric can satisfy $\Ric^{t,1}=\Ric^{t,2}$.
Furthermore, the following example shows that there are non-K\"ahler and non-Vaisman manifolds which are lcK with Lee form of constant norm such that $dJ\theta = (1-n)\theta\wedge J\theta$. For this class of manifolds, the condition $dRic^{t,2}=0$ is automatically satisfied when $t=2n-1\pm2\sqrt{n(n-1)}$.
\begin{ex*}
    Consider the Tricerri metric on the Inoue surface given in coordinates $z,w$ on the universal cover $\C\times\H$ as
    $$ \omega= \frac{\im}{(\imm w)^2}dw\wedge d\bar w + \im(\imm w) dz\wedge d\bar z . $$
    A straightforward computation proves that its Lee form is $\theta=\tfrac{dy}{y}$ where $w=x+\im y$. Hence,
    $$ \theta\wedge J\theta = \frac{dx\wedge dy}{y^2} = -dJ\theta.$$
    Finally, the Lee form has constant norm $\|\theta\| =1$. 
    \hfill$\blacksquare$
\end{ex*}

We now prove that, in dimension greater than $3$, the norm $\|\theta\|$ has to be constant.

\begin{prop}\label{prop: constant norm}
    Let $(M^n,J,\omega)$ be an lcK manifold of dimension $n\geq3$. Suppose $\omega$ is Gauduchon, $dRic^{t,2}=0$ and $t\neq2n-1\pm2\sqrt{n(n-1)}$, then the norm of $\theta$ is constant.
\end{prop}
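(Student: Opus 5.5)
The plan is to run the argument of Proposition \ref{prop: 2Ricci closed} again, but this time keep the terms coming from $d\|\theta\|^2$. I will first show that $d\|\theta\|^2$ is pointwise proportional to $\theta$, and then show that the proportionality factor vanishes.

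\emph{Step 1: the differentiated equation.} Since $\omega$ is Gauduchon, $d^*\theta=0$. Differentiating \eqref{eq: Ric t,2} and using $d(\theta\wedge J\theta)=-\theta\wedge dJ\theta$ and $d\omega=\theta\wedge\omega$ gives
\begin{equation*}
0=d\Ric^{t,2}=-c_1(t)\,\theta\wedge dJ\theta+c_2(t)\bigl(d\|\theta\|^2+\|\theta\|^2\theta\bigr)\wedge\omega ,
\end{equation*}
with $c_1,c_2$ as in the proof of Proposition \ref{prop: 2Ricci closed}. Indeed, the coefficient $(n-1)\tfrac t2-\tfrac{(1-t)^2}{8}$ of $\|\theta\|^2\omega$ in \eqref{eq: Ric t,2} equals $\tfrac{nt}{2}-\tfrac{(1+t)^2}{8}=c_2(t)$. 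Its zeros are the roots of $t^2+2(1-2n)t+1=0$, namely $t=2n-1\pm2\sqrt{n(n-1)}$. So the hypothesis is exactly $c_2(t)\neq0$.

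\emph{Step 2: $d\|\theta\|^2\parallel\theta$.} Wedge the identity of Step 1 with $\theta$. This gives $c_2(t)\,\theta\wedge d\|\theta\|^2\wedge\omega=0$. For $n\geq3$, $\omega\wedge\cdot$ is injective on $2$-forms, so $\theta\wedge d\|\theta\|^2=0$. Hence on the dense open set $U=\{\theta\neq0\}$ (dense, otherwise $\theta\equiv0$ near a point and the statement is trivial there) we can write
\begin{equation*}
d\|\theta\|^2=\varphi\,\theta,\qquad\text{so that also}\qquad J\theta^\sharp(\|\theta\|^2)=0 .
\end{equation*}
Applying $d$ gives $d\varphi\wedge\theta=0$, so $d\varphi=\psi\theta$ on $U$. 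Substituting back, the equation becomes $\theta\wedge\bigl(-c_1dJ\theta+c_2(\varphi+\|\theta\|^2)\omega\bigr)=0$. Since $dJ\theta$ is of type $(1,1)$, this yields, exactly as in \eqref{eq: a},
\begin{equation*}
c_1\,dJ\theta=h\,\theta\wedge J\theta+c_2(\varphi+\|\theta\|^2)\,\omega .
\end{equation*}
Tracing with the Gauduchon identity $\tr_\omega dJ\theta=-(n-1)\|\theta\|^2$ determines $h$ in terms of $\varphi$ and $\|\theta\|^2$.

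\emph{Step 3 (main obstacle): $\varphi=0$.} The integral identity from $d^*\theta=0$ only gives $\int_M\varphi\|\theta\|^2\,\mathrm{vol}_g=\int_M\langle d\|\theta\|^2,\theta\rangle\,\mathrm{vol}_g=0$. This alone is not enough. I would take two further inputs.
\begin{enumerate}
\item Differentiate the displayed expression for $c_1dJ\theta$ and wedge with $\theta$ once more. This should give an algebraic relation forcing $(c_1-h)\,\theta\wedge dJ\theta$ to be a multiple of $d\varphi\wedge\omega$, so that $\varphi$ is tied to $\|\theta\|^2$ by the traced relation.
\item Compute $\Delta\|\theta\|^2=-\tr_\omega dJd\|\theta\|^2$, using \cite[Page 502]{gau84} as in Proposition \ref{prop: ddc Ric ch}. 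With $d\|\theta\|^2=\varphi\theta$ and $d\varphi=\psi\theta$, this gives $\Delta\|\theta\|^2=(n-1)\varphi\|\theta\|^2-\psi\|\theta\|^2$.
\end{enumerate}
I would then evaluate at a maximum point of $\|\theta\|^2$ and integrate, as in Proposition \ref{prop: dRic Ch}, aiming for a sign-definite integrand that forces $\varphi\equiv0$.

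Getting this sign control is where I expect the real difficulty. If it fails, the fallback is a global argument. The relation $d\|\theta\|^2=\varphi\theta$ says $\|\theta\|^2$ is constant along the leaves of $\ker\theta$. On the minimal cover, where $\theta=du$, this makes $\|\theta\|^2=F(u)$ invariant under the deck translations $u\mapsto u+c$. Compactness then forces a monotone $F$ to be constant, so $d\|\theta\|^2=0$.
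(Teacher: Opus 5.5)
Your Steps 1 and 2 match the paper's opening, and item 1 of Step 3, carried out, reproduces the paper's second-order equation for $H=\|\theta\|^2$ along $\theta^\sharp$. But Step 3 is never closed, and neither of your closing strategies works. For $t\neq1$ (the case $t=1$ is Proposition~\ref{prop: dRic Ch}), differentiating your expression for $c_1dJ\theta$ and eliminating $h$ via the trace gives, on $\{\theta\neq0\}$,
\begin{equation*}
c_1\psi=-nc_2\frac{\varphi^2}{H}-\bigl((n+1)c_1+2nc_2\bigr)\varphi-n(c_1+c_2)H,\qquad c_1+c_2=\frac{(n-1)(1+t)^2}{8}\geq0 .
\end{equation*}
At a maximum of $H$ one has $\varphi=0$ and $\psi\leq0$, which agrees with the sign of the right-hand side, so no contradiction arises. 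For $t=-1$ the right-hand side vanishes at every critical point of $H$, so no sign can be extracted. For $t\neq-1$ the minimum only shows that $\theta$ has zeros.

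The fallback fails for a more basic reason. $d\|\theta\|^2\wedge\theta=0$ only makes $\|\theta\|^2$ locally a function of $u$ that is periodic under the deck group. Nothing you have proved makes that function monotone. Monotonicity is in fact the entire content: if $\theta^\sharp(\|\theta\|^2)$ had a sign, your identity $\int_M\varphi\|\theta\|^2\,\mathrm{vol}_g=0$ would already finish the proof. A nonconstant periodic profile is perfectly compatible with $d\|\theta\|^2\parallel\theta$.

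The missing idea is the paper's change of unknown $\phi=\theta^\sharp(H)H^{-2}=\varphi/H$, which removes the explicit $H$. Since $d\phi=(\psi/H-\phi^2)\theta$, the equation above becomes
\begin{equation*}
c_1\,d\phi=-\bigl[(c_1+nc_2)\phi^2+\bigl((n+1)c_1+2nc_2\bigr)\phi+n(c_1+c_2)\bigr]\theta=:c_1P(\phi)\,\theta ,
\end{equation*}
an autonomous first-order equation with a fixed quadratic $P$. The Gauduchon condition is then used globally through Stokes rather than pointwise. Take the bounded primitive $\Psi(x)=\int_0^xP(y)e^{-y^2}dy$; the weight $e^{-y^2}$ is what makes $\Psi$ bounded. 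Then the form $\Psi(\phi)\star\theta$ extends continuously by zero over $\{\theta=0\}$, and $d\star\theta=0$ gives
\begin{equation*}
d\bigl(\Psi(\phi)\star\theta\bigr)=P(\phi)^2e^{-\phi^2}\|\theta\|^2\,\mathrm{vol}_g .
\end{equation*}
Integrating over $M$ forces $P(\phi)\equiv0$, hence $d\phi=0$ on $\{\theta\neq0\}$, and the constancy of $\|\theta\|$ follows from there.
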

\begin{proof}
    This time, differentiating \eqref{eq: Ric t,2} we get
    \begin{equation*}\label{eq: dRic t,2}
        0= -c_1(t)\theta\wedge dJ\theta + c_2(t)d\|\theta\|^2 \wedge\omega + c_2(t)\|\theta\|^2 \theta\wedge\omega.
    \end{equation*}
    Now wedging it with $\theta$ we get
    $$0=c_2(t)d\|\theta\|^2\wedge\theta\wedge\omega.$$
    Since in dimension greater than $3$ wedging with $\omega$ is injective, we get that
    $d\|\theta\|^2$ is a multiple of $\theta$, hence on the locus where $\theta\neq0$ it holds $d\|\theta\|^2 = \theta^\sharp(\|\theta\|^2)\|\theta\|^{-2}\theta$.
    Then,
    $$0= \theta\wedge\left( -c_1(t)dJ\theta + c_2(t)\sigma\omega \right)$$
    where $\sigma = \theta^\sharp(\|\theta\|^2)\|\theta\|^{-2} +\|\theta\|^2 $.
    This means that there exists a function $f$ such that
    \begin{equation*}\label{eq: 1}
        c_1(t)dJ\theta = c_2(t)\sigma\omega + f\theta\wedge J\theta.
    \end{equation*}
    Tracing it we get
    $$ (1-n)c_1(t)\|\theta\|^2 = nc_2(t)\sigma + f\|\theta\|^2,$$
    while differentiating it we get
    $$0= c_2(t)(\theta^\sharp(\sigma)\|\theta\|^{-2}+\sigma)\theta\wedge\omega - f\theta\wedge dJ\theta.$$
    Therefore,
    $$ f\sigma = c_1(t)(\theta^\sharp(\sigma)\|\theta\|^{-2}+\sigma)$$
    For the sake of notation, let us call $H=\|\theta\|^2$ and $\dot{H}=\theta^\sharp(H)$, and use just $c_1,c_2$ without expliciting the dependence on t. Then the above equations give
    $$c_1\ddot{H} = (c_1-nc_2)\dot{H}^2H^{-1} - ((1+n)c_1+2nc_2)\dot{H}H - n(c_1+c_2)H^3.$$
    We can rewrite this ODE as
    $$ c_1\ddot{H}H^{-3} - 2c_1\dot{H}^2H^{-4} = -(c_1+nc_2)\dot{H}^2H^{-4} - ((1+n)c_1+2nc_2)\dot{H}H^{-2} - n(c_1+c_2).$$
    Thus, we define a new function $\varphi = \dot{H}H^{-2}$, obtaining the ODE
    $$ \dot{\varphi}H^{-1} = -\frac{c_1+nc_2}{c_1}\varphi^2 - ((1+n)c_1+2nc_2) \varphi - n(c_1+c_2).$$
    Which means that $d\varphi = P(\varphi)\theta$ for $P$ the second order polynomial on the above RHS.
    We want to prove that $P(\varphi)$ vanishes. To do so, consider a bounded primitive function
    $$\Psi(x) = \int_0^x P(y)e^{-y^2}dy.$$
    Then, $\Psi(\varphi)\star\theta$ is a continuous $(2n-1)$-form on the whole $M$. Moreover, its differential is 
    \begin{align*}
        d(\Psi(\varphi)\star\theta) =  d\Psi(\varphi)\wedge\star\theta 
        &= P(\varphi)e^{-\varphi^2}d\varphi\wedge\star\theta =  P(\varphi)^2e^{-\varphi^2}\theta\wedge\star\theta = P(\varphi)^2e^{-\varphi^2}\|\theta\|^2 \, \mathrm{vol}_g
    \end{align*}
    which is a continuous $2n$-form on the whole $M$.
    This means that we can apply Stokes theorem giving
    \begin{align*}
        0 = \int_M d (\Psi(\varphi)\star\theta) 
        = \int_M P(\varphi)^2e^{-\varphi^2}\|\theta\|^2 \, \mathrm{vol}_g.
    \end{align*}
    It follows that $P(\varphi)$, and hence $d\varphi$, vanish.
    This implies that $H=\|\theta\|^2$ is constant.
\end{proof}

Finally, in the case of surfaces we can deduce the following result.

\begin{prop}
    Let $(M,J,\omega)$ be an lcK surface. 
    Suppose $\omega$ is Gauduchon and  $dRic^{t,2}=0$ for $t>0$, then $\omega$ is K\"ahler.
\end{prop}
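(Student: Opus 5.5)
The plan is to differentiate the explicit formula \eqref{eq: Ric t,2} for $n=2$, turn the closedness condition into a scalar identity by wedging with $J\theta$, and integrate it over $M$. The aim is an integral identity in which all terms carry the same sign when $t>0$, forcing $\theta\equiv0$.

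\textbf{Step 1: the closedness equation.} Since $\omega$ is Gauduchon we have $d^*\theta=0$. Then $\Ric^{Ch,1}$ and $dJ\theta$ are closed, and $d(\theta\wedge J\theta)=-\theta\wedge dJ\theta$. Setting $H=\|\theta\|^2$, the condition $d\Ric^{t,2}=0$ becomes, as in the proof of Proposition \ref{prop: constant norm},
\[
0=-c_1(t)\,\theta\wedge dJ\theta+c(t)\,(dH+H\theta)\wedge\omega,
\]
where $c_1(t)=\frac{(1-t)^2}{4}$ and $c(t)=\frac{t}{2}-\frac{(1-t)^2}{8}$ for $n=2$.

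\textbf{Step 2: a scalar identity.} In complex dimension $2$ the trick of wedging with $\theta$ and using injectivity of $\omega\wedge\cdot$ on $2$-forms no longer gives information. Instead I will wedge the $3$-form equation with $J\theta$ to obtain an identity between top forms:
\[
c_1(t)\,\theta\wedge J\theta\wedge dJ\theta=c(t)\,(dH+H\theta)\wedge J\theta\wedge\omega .
\]
Both sides are computed pointwise in an adapted unitary frame. The right-hand side is a universal constant times $\bigl(\theta^\sharp(H)+H^2\bigr)\mathrm{vol}_g$. The left-hand side is a combination of $H\,\tr_\omega dJ\theta=-H^2$ and $dJ\theta(\theta^\sharp,J\theta^\sharp)$.

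\textbf{Step 3: integration.} For the left-hand side I write
\[
dJ\theta(\theta^\sharp,J\theta^\sharp)=\theta^\sharp(J\theta(J\theta^\sharp))-J\theta^\sharp(J\theta(\theta^\sharp))-J\theta([\theta^\sharp,J\theta^\sharp])=\theta^\sharp(H)-J\theta([\theta^\sharp,J\theta^\sharp]).
\]
The term $\theta^\sharp(H)$ integrates to $\int H\,d^*\theta=0$. For the bracket term, the identity from \cite[Proposition~5]{adl} used in the proof of Theorem \ref{thm: Bismut-flat} gives
\[
\int_M J\theta([\theta^\sharp,J\theta^\sharp])\,\mathrm{vol}_g=\frac{2}{n}\int_M\|(\nabla\theta)^{1,1}\|^2\,\mathrm{vol}_g\geq0 .
\]
After integration one should therefore obtain an identity of the form
\[
A(t)\int_M H^2\,\mathrm{vol}_g+B(t)\int_M\|(\nabla\theta)^{1,1}\|^2\,\mathrm{vol}_g=0,
\]
with $A,B$ explicit quadratic polynomials in $t$ built from $c_1(t)$ and $c(t)$.

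\textbf{Step 4: the sign check (main obstacle).} The conclusion needs $A(t)$ and $B(t)$ to be nonzero and of the same sign for all $t>0$. Then $\int H^2=0$, so $\theta=0$ and $\omega$ is K\"ahler. The difficulty is that $c(t)$ itself changes sign on $(0,\infty)$, for instance it is negative for $t$ small. So I must track carefully how the $H^2$ contributions coming from $c_1(t)\,\tr_\omega dJ\theta$ and from $c(t)H^2$ combine into $A(t)$.

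If one wedge does not give a definite sign on all of $t>0$, I would add a second identity. Wedging the closedness equation with $\theta$, or tracing the corresponding $2$-form identity, produces another relation between $\int H^2$ and $\int\|(\nabla\theta)^{1,1}\|^2$. One can then try to choose a linear combination of the two integral identities with definite sign precisely when $t>0$. The case $t=1$ is already covered by Proposition \ref{prop: dRic Ch}, and serves as a consistency check on the computed coefficients.
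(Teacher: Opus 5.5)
\textbf{Gap: Step 4 fails, and it cannot be repaired.} Your Steps 1--3 are correct, but carrying them out shows that Step 4 fails for every $t>0$ except $t=1$. Wedging with $J\theta$ gives, pointwise,
$c_1(t)\bigl(-H^2-dJ\theta(\theta^\sharp,J\theta^\sharp)\bigr)=c(t)\bigl(\theta^\sharp(H)+H^2\bigr)$.
For $n=2$ the bracket identity you quote reads $\int_M J\theta([\theta^\sharp,J\theta^\sharp])\,\mathrm{vol}_g=\int_M\|(\nabla\theta)^{1,1}\|^2\,\mathrm{vol}_g$. Integrating with it, and using $c_1(t)+c(t)=\frac{(1+t)^2}{8}$, yields
\[
\frac{(1+t)^2}{8}\int_M H^2\,\mathrm{vol}_g=\frac{(1-t)^2}{4}\int_M\|(\nabla\theta)^{1,1}\|^2\,\mathrm{vol}_g .
\]
So $A(t)=\frac{(1+t)^2}{8}$ and $B(t)=-\frac{(1-t)^2}{4}$ always have opposite signs. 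The identity just balances two non-negative quantities and gives nothing, except at $t=1$, where it recovers Proposition \ref{prop: dRic Ch}.

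Your fallback does not supply a second relation either:
\begin{itemize}
\item Wedging the closedness equation with $\theta$ kills both the $c_1$-term and the $H\theta$-term, leaving only $c(t)\,J\theta^\sharp(H)=0$.
\item In complex dimension $2$ there is no $2$-form identity to trace.
\item The remaining two components (wedging with the duals of $V$ and $JV$) only relate the mixed components of $dJ\theta$ to $V(H)$ and $JV(H)$.
\end{itemize}

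No combination of identities can close this, because the statement itself fails at $t=3\pm2\sqrt2$. These are the values $2n-1\pm2\sqrt{n(n-1)}$ for $n=2$, and both are positive. Take the Tricerri metric on the Inoue surface from the paper's Example:
\begin{itemize}
\item It has $\|\theta\|$ constant and $dJ\theta=-\theta\wedge J\theta$.
\item Tracing the latter gives $d^*\theta=0$, so the metric is Gauduchon, and it is non-K\"ahler.
\item By \eqref{eq: Ric t,2}, $d\Ric^{t,2}=c(t)\|\theta\|^2\,\theta\wedge\omega$, which vanishes exactly when $c(t)=0$.
\end{itemize}
It also saturates your identity: there $\|(\nabla\theta)^{1,1}\|^2=\|\theta\|^4$ pointwise, and $(1+t)^2=2(1-t)^2$ holds exactly at $t=3\pm2\sqrt2$. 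Any correct argument must therefore exclude these two values and use $c(t)\neq0$ in an essential way.

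\textbf{Comparison with the paper.} The paper argues pointwise rather than by integration: it applies $dJd$ to \eqref{eq: Ric t,2} and evaluates at a maximum of $H$. That route divides by $c_2(t)=c(t)$, so it also misses these values. Moreover, redoing the computation at a critical point of $H$ gives $\tr_\omega dJdH=-2\bigl(1+\tfrac{c_2}{c_1}\bigr)H^2=-\tfrac{(1+t)^2}{(1-t)^2}H^2$. This is compatible with $\tr_\omega dJdH\le0$ at a maximum, so no contradiction arises there; at a minimum it only forces $\min H=0$.
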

\begin{proof}
    For $t=1$ the result follows from Proposition \ref{prop: dRic Ch}.
    So we can suppose $t\neq1$.
    We will work in the locus where $\theta\neq0$. Here we have an orthogonal frame given by $\{\theta,\theta^\sharp,V,JV\}$, with $V$ of unit norm. For the reader convenience we rewrite the differential of \eqref{eq: Ric t,2}:
    \begin{equation*}
        0= -c_1(t)\theta\wedge dJ\theta + c_2(t)d\|\theta\|^2 \wedge\omega + c_2(t)\|\theta\|^2 \theta\wedge\omega.
    \end{equation*}
    Now, by dimensional reasons, this equation implies that
    \[
    \begin{cases}
        c_1(t)dJ\theta(\theta^\sharp,V) = c_1(t)dJ\theta(J\theta^\sharp,JV) = c_2(t) JV(\|\theta\|^2)\\
        c_1(t)dJ\theta(J\theta^\sharp,V) = c_1(t)dJ\theta(JV,\theta^\sharp) = c_2(t) V(\|\theta\|^2)\\
        c_1(t)dJ\theta(V,JV) = c_2(t)\left(\|\theta\|^2+\theta^\sharp(\|\theta\|^2)\|\theta\|^{-2}\right)
    \end{cases}
    \]
    Moreover, since $\tr_\omega dJ\theta = -\|\theta\|^2$, we also get that 
    $$dJ\theta(\theta^\sharp,J\theta^\sharp) = -\|\theta\|^2 - dJ\theta(V,JV).$$
    We will use the notation $H=\|\theta\|^2$ and $\dot H=\theta^\sharp(\|\theta\|^2)$.
    Taking $dJd$ of \eqref{eq: Ric t,2} we now get
    $$0=-c_1(t)dJ\theta\wedge dJ\theta + c_2(t)\left[dJdH\wedge\omega - JdH\wedge\theta\wedge\omega + dH\wedge J\theta\wedge\omega \right]$$
    using that $dJd\omega=0$ since the metric is Gauduchon by hypothesis.
    Now in a maximal point for $H$, these equations give
    \begin{align*}
        0&= 2(1+\tfrac{c_2}{c_1})H^2 + \tr_\omega dJdH = \left(1 - \tfrac{(1+t)^2}{(1-t)^2}\right)H + \tr_\omega dJdH
    \end{align*}
    Notice that for any $t>0$, it holds $1 - \tfrac{(1+t)^2}{(1-t)^2}<0$ implying that $H=0$ since $\tr_\omega dJdH\leq0$ in a maximal point of $H$. 
\end{proof}

We end this section by noticing that the thesis of Theorem \ref{th: main dRic} can be improved if we strength our hypothesis to $\Ric^{t,2}=0$.
Indeed, we already proved that on a compact lcK manifold $(M^n,J,\omega)$ of dimension $n\geq3$, if $\omega$ is Gauduchon and $\Ric^{t,2}=0$, (by Theorem \ref{th: main dRic}) the parameter $t$ has to be $-1,2n-1+2\sqrt{n(n-1)}$ or $2n-1-2\sqrt{n(n-1)}$.
In the latter two cases, the metric cannot be Vaisman since $dJ\theta=(1-n)\theta\wedge J\theta$, and the first Chern--Ricci form is given by \eqref{eq: Ric for c_2} as
$$\Ric^{Ch,1} = \frac{(n-1)(t-1)}{2}dJ\theta \quad\text{for }t=2n-1 \pm 2\sqrt{n(n-1)}.$$
However, $0<2n-1-2\sqrt{n(n-1)}<1$, therefore Theorem A in \cite{bo} would imply that the metric is Vaisman, giving a contraddiction.
To sum up, we have that
\begin{theo}\label{th: main Ric=0}
    Let $(M^n,J,\omega)$ be an lcK manifold of dimension $n\geq3$, and suppose that $\omega$ is Gauduchon and $\Ric^{t,2}=0$.
    Then, if $t\neq-1$ or $2n-1+2\sqrt{n(n-1)}$ the metric is K\"ahler.
\end{theo}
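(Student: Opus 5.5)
Since $\Ric^{t,2}=0$ implies $d\Ric^{t,2}=0$, Theorem \ref{th: main dRic} applies directly. It shows that either $\omega$ is K\"ahler or $t\in\{-1,\,2n-1+2\sqrt{n(n-1)},\,2n-1-2\sqrt{n(n-1)}\}$. So it is enough to exclude $t_-:=2n-1-2\sqrt{n(n-1)}$ for non-K\"ahler $\omega$. I first check that $t_-\neq1$, so that $t_-$ is not the Chern case.

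Assume $t=t_-$ and $\omega$ is not K\"ahler. Theorem \ref{th: main dRic} is proved through Proposition \ref{prop: constant norm}, which gives $\|\theta\|$ constant since $t_-$ is a root of $c_2$. This point needs care: Proposition \ref{prop: constant norm} excludes exactly these roots, so constancy of the norm has to be obtained another way. I would redo the argument of Proposition \ref{prop: constant norm} with $c_2=0$. Then $d\Ric^{t,2}=0$ reduces to $\theta\wedge dJ\theta=0$. Tracing the Einstein relation $\Ric^{t,2}=0$ itself, via \eqref{eq: Ric t,2}, gives an extra scalar identity, and I would use it to recover $dJ\theta=(1-n)\theta\wedge J\theta$, presumably from the argument of Proposition \ref{prop: 2Ricci closed}.

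Once Proposition \ref{prop: 2Ricci closed} holds, equation \eqref{eq: Ric for c_2} gives
$$0=\Ric^{t_-,2}=\Ric^{Ch,1}+\frac{(n-1)(1-t_-)}{2}dJ\theta,$$
so $\Ric^{Ch,1}=\frac{(n-1)(t_--1)}{2}dJ\theta$, that is, $\Ric^{t_-,1}=0$. I verify $0<t_-<1$: we have $t_->0$ because $(2n-1)^2>4n(n-1)$, and $t_-<1$ because $n-1<\sqrt{n(n-1)}$. Theorem A of \cite{bo} then applies: for a Gauduchon lcK metric whose first Gauduchon--Ricci form vanishes with parameter in $(0,1)$, the metric is Vaisman. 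But a Vaisman metric satisfies $dJ\theta=\theta\wedge J\theta-\|\theta\|^2\omega$, and this is incompatible with $dJ\theta=(1-n)\theta\wedge J\theta$. Tracing both with respect to $\omega$ gives $-(n-1)\|\theta\|^2=(1-n)\|\theta\|^2$, which is consistent, so the trace alone does not decide. Instead I evaluate both on $(V,JV)$ with $V\perp\theta^\sharp,J\theta^\sharp$, which is possible since $n\ge2$: the first gives $-\|\theta\|^2$ and the second gives $0$. Hence $\theta=0$, a contradiction.

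The main obstacle is the reduction in the second step. Theorem \ref{th: main dRic} and the case analysis of Proposition \ref{prop: 2Ricci closed} must genuinely cover $t_-$, including constancy of $\|\theta\|$, and I must check that the hypotheses of \cite[Theorem A]{bo} (compactness, Gauduchon, and the range of $t$) are met exactly.
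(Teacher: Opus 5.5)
Your overall route is the paper's: use Theorem \ref{th: main dRic} to reduce to $t\in\{-1,t_+,t_-\}$; get $dJ\theta=(1-n)\theta\wedge J\theta$ and hence $\Ric^{t_-,1}=\Ric^{t_-,2}=0$ via \eqref{eq: Ric for c_2}; apply \cite[Theorem A]{bo} since $0<t_-<1$; and contradict Vaismanness. You are also right that Proposition \ref{prop: constant norm} excludes exactly $t_\pm$, so Proposition \ref{prop: 2Ricci closed} cannot be quoted as stated. The paper's own proof passes over this point silently.

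\textbf{The gap.} Your proposal does not close this step, and the repair you sketch points the wrong way. Tracing $\Ric^{t,2}=0$ only yields an identity involving the Chern scalar curvature $\tr_\omega\Ric^{Ch,1}$, which is uncontrolled and says nothing about $dJ\theta$. Moreover, constancy of $\|\theta\|$ is not needed anywhere in the argument.

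\textbf{How to close it.} The missing observation is that $c_2(t_-)=0$ kills precisely the terms in which $\|\theta\|$ is differentiated. Since $d^*\theta=0$, differentiating \eqref{eq: Ric t,2} gives $d\Ric^{t,2}=-c_1\,\theta\wedge dJ\theta+c_2\,(d\|\theta\|^2+\|\theta\|^2\theta)\wedge\omega$. So at $t=t_-$ the hypothesis gives $\theta\wedge dJ\theta=0$, because $c_1(t_-)=n(1-t_-)^2/8\neq0$ (this is where your check $t_-\neq1$ is used).

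At points where $\theta\neq0$, this gives $dJ\theta=\theta\wedge\alpha$ for some $1$-form $\alpha$. Since $dJ\theta$ is $J$-invariant, the component of $\alpha$ orthogonal to $\theta$ and $J\theta$ must vanish, so $dJ\theta=h\,\theta\wedge J\theta$. The Gauduchon trace identity $\tr_\omega dJ\theta=-(n-1)\|\theta\|^2$ then forces $h=1-n$. On the interior of $\{\theta=0\}$ both sides vanish, so by density $dJ\theta=(1-n)\theta\wedge J\theta$ holds on all of $M$.

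The derivation of \eqref{eq: Ric for c_2} uses only this identity, $d^*\theta=0$, and $c_2=0$ (equivalently $(1-t)^2=4(n-1)t$), not constancy of the norm. Hence $\Ric^{t_-,1}=\Ric^{t_-,2}=0$, and the rest of your argument goes through, including the pointwise contradiction obtained by evaluating on $(V,JV)$.
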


\end{document}